\def \ds{\displaystyle}
\def \wt{\widetilde}
\def \R{\mathbb{R}}
\def \C{\mathbb{C}}
\def \v{\varphi}
\def \p{\rho}
\def \f{\phi}
\def \e{\varepsilon}
\def \D{\Delta}
\def \A{\forall}
\def \E{\exists}
\def \pa{\partial}
\def \n{\nabla}
\def \a{\alpha}
\def \me{\mathcal{E}}
\def \n{\nabla}
\def \t{\theta}
\def \c{\chi}
\def \P{\Phi}
\def \ta{\tau}
\newcommand{\im}{\operatorname{Im}}
\newcommand{\re}{\operatorname{Re}}
\theoremstyle{definition}
\newtheorem{dei}{Definition}[section]
\newtheorem{thm}{Theorem}[section]
\newtheorem{lem}{Lemma}[section]
\newtheorem{cor}{Corolary}[section]
\newtheorem{rem}{Remark}[section]
\newtheorem*{nota}{Notation}
\begin{document}
\title[The derivation of the conservation laws]{The derivation of the conservation law for defocusing nonlinear Schr{\"o}dinger equations with non-vanishing initial data at infinity}
\author[H. Miyazaki]{Hayato MIYAZAKI}
\address[]{Department of Mathematics, Graduate School of Science, Hiroshima University, Higashi-Hiroshima, 739-8521, JAPAN}
\email{h-miyazaki@hiroshima-u.ac.jp}
\keywords{Gross-Pitaevskii equation, Cubic-quintic nonlinear Schr{\"o}dinger equations, Non-vanishing boundary condition, Conservation laws}
\subjclass[2010]{35A01, 35Q41, 35L65}

\begin{abstract}
For nonlinear Schr{\"o}dinger equations in less than or equal to four dimension, with non-vanishing initial data at infinity, a new approach to derive the conservation law is obtained. Since this approach does not contain approximating procedure, the argument is simplified and some of technical assumption of the nonlinearity to derive the conservation law and time global solutions, is removed.   
\end{abstract}

\maketitle 

\section{Introduction}
In this paper, we consider defocusing nonlinear Schr{\"o}dinger equations in dimension $n \leq 4$.
\begin{align}
\left\{
\begin{array}{l}
\ds i\frac{\partial u}{\partial t}+\Delta u+f(|u|^2)u=0,\quad t\in(0,T),\;x\in\R^{n}, \\
u(0,x)=u_{0}(x),\quad x\in\R^{n},
\end{array}
\right. \label{gp}
\end{align}
where $u(t,x): (0,T)\times \R^{n} \rightarrow \C$. The initial data $u_{0}$ has the following boundary condition:
\[
|u_{0}(x)|^{2} \rightarrow \p_{0} \quad \text{as} \quad |x| \rightarrow \infty,
\]
where $\p_{0}>0$ denotes the light intensity of the background. The nonlinear term $f$ is assumed to be defocusing. Namely the real-valued function $f$ satisfies the following assumption:
\begin{flalign*}
& \hspace{4cm} f(\p_{0})=0,\quad f'(\p_{0})<0.  & (\textbf{H}_{f})
\end{flalign*}
Equation (\ref{gp}) with non-vanishing initial data at infinity appears as a relevant model in great various physical problems: for example, Bose-Einstein
condensation and superfluidity (see \cite{bib021}, \cite{bib022}, \cite{bib023}), and nonlinear topics (dark solitons, optical vortices) (see \cite{bib025}, \cite{bib060}). Two important model cases for (\ref{gp}) have been extensively studied both in the physical and mathematical literatures: the
Gross-Pitaevskii equation (where $f(r)=1-r$) and the so-called "cubic-quintic" Schr{\"o}dinger
equation (where $f(r)=(r-\p_{0})(2a+\p_{0}-3r)$, $0<a<\p_{0}$). Gallo \cite{bib007} has considered the Cauchy problem for (\ref{gp}). He proved the  following theorem: 
\begin{thm}[Theorem 1.1 in Gallo \cite{bib007}] \label{gallo:01}
Let $n \leq 4$ and $\p_{0}>0$. Assume that $f\in C^{k}(\R_{+})$ ($k=3$ if $n=2$, $3$, $k=4$ if $n=4$) satisfying $(\textbf{H}_{f})$, and there exist $\a_{1} \geq 1$, with a supplementary condition $\a_{1} <\a_{1}^{*}$ if $n=3$, $4$ ($\a_{1}^{*}=3$ if $n=3$, $\a_{1}^{*}=2$ if $n=4$), and $\a_{2}\in \R$ with $\a_{1}-\a_{2} \leq 1/2$ such that
\[
\hspace{3mm} \E{C_{0}>0},\; \E{A}>\p_{0}\; s.t.\; \left\{
\begin{array}{lr}
\A{r} \geq 1,\; \left\{
  \begin{array}{l}
   |f''(r)| \leq C_{0}r^{\a_{1}-3} \; \text{if}\; n=1, 2, 3, \\
   |f'''(r)| \leq C_{0}r^{\a_{1}-4} \; \text{if}\; n=4, 
  \end{array}
 \right. & \hspace{3.5mm} (\textbf{H}_{\a_{1}}) \\
 \left\{ 
  \begin{array}{l}
   \text{if}\; \a_{1} \leq 3/2,\; V\; \text{is bounded from below}, \\
   \text{if}\; \a_{1} > 3/2, \; \A{r} \geq A,\; r^{\a_{2}}\leq C_{0}V(r), 
  \end{array}
 \right. & \hspace{3.5mm} (\textbf{H}_{\a_{2}}) \\
\end{array}
\right.
\]
where $V(r):=\int_{r}^{\rho_{0}}f(s)ds$. Then for any function $\f$ satisfying
\begin{flalign*}
& \hspace{1.5cm} \f\in C_{b}^{k+1}(\R^{n}),\quad \n \f \in H^{k+1}(\R^{n})^{n},\quad |\f|^{2}-\p_{0} \in L^{2}(\R^{n}), & (\textbf{H}_{\f})
\end{flalign*}
(\ref{gp}) is globally well-posed in $\f+H^{1}(\R^{n})$. Namely, for any $w_{0}\in H^{1}(\R^{n})$, there exists an unique $w \in C(\R, H^{1}(\R^{n}))$ such that $\f+w$ is the solution to (\ref{gp}) with the initial data $w(0)=w_{0}$. Moreover, The solution depends continuously on the initial data $w_{0}\in H^{1}$.  
\end{thm}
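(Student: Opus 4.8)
The plan is to produce the solution in the affine space $\f+H^{1}(\R^{n})$ by combining a contraction-mapping local theory with an a priori bound coming from the conserved energy. Writing $u=\f+w$ and using that $\f$ is time-independent, the unknown $w$ solves
\[
i\pa_{t}w+\D w=-\D\f-f(|\f+w|^{2})(\f+w),\qquad w(0)=w_{0},
\]
which I recast through the Duhamel formula as a fixed-point problem for the Schr{\"o}dinger group $e^{it\D}$. The local theory is run in a Strichartz space $C([0,T];H^{1})\cap L^{q}([0,T];W^{1,r})$ for an admissible pair $(q,r)$. The forcing splits as a linear-in-$w$-free part $\D\f+f(|\f|^{2})\f$, which lies in $L^{2}$ by $(\textbf{H}_{\f})$ together with $f(\p_{0})=0$ and $|\f|^{2}-\p_{0}\in L^{2}$, plus the genuinely nonlinear difference $f(|\f+w|^{2})(\f+w)-f(|\f|^{2})\f$, which I must show is locally Lipschitz from $H^{1}$ into the dual Strichartz space. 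The growth hypothesis $(\textbf{H}_{\a_{1}})$, together with the dimensional restriction $\a_{1}<\a_{1}^{*}$ when $n=3,4$, is exactly what controls this difference and its difference quotients through the Gagliardo--Nirenberg and Sobolev embeddings, the non-decaying background being absorbed using $\f\in C_{b}^{k+1}$ and $\n\f\in H^{k+1}$. This yields a unique maximal solution on $[0,T_{\max})$ together with the standard blow-up alternative: if $T_{\max}<\infty$ then $\|w(t)\|_{H^{1}}\to\infty$ as $t\uparrow T_{\max}$.

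Next I would set up the conserved energy
\[
\me(u)=\frac{1}{2}\int_{\R^{n}}|\n u|^{2}\,dx+\int_{\R^{n}}V(|u|^{2})\,dx,\qquad V(r)=\int_{r}^{\p_{0}}f(s)\,ds,
\]
and show it is both finite and constant along the flow. Finiteness of the kinetic part is immediate since $\n u=\n\f+\n w\in L^{2}$; for the potential part one uses $V(\p_{0})=0$, $V'(\p_{0})=-f(\p_{0})=0$ and $V''(\p_{0})=-f'(\p_{0})>0$ from $(\textbf{H}_{f})$, so that $V$ has a nondegenerate minimum at $\p_{0}$ and is controlled by $(|u|^{2}-\p_{0})^{2}$ near the background, while $(\textbf{H}_{\a_{1}})$--$(\textbf{H}_{\a_{2}})$ bound its growth for large values. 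Conservation is formally obtained by pairing the equation with $\pa_{t}\bar u$ and taking real parts; to justify this at $H^{1}$ regularity one regularizes the data and passes to the limit, which is the classical approximation step that the present paper is designed to streamline.

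The crux is to convert energy conservation into a uniform-in-time $H^{1}$ bound on $w$. The kinetic term controls $\|\n u\|_{L^{2}}$ and hence $\|\n w\|_{L^{2}}$, since $\n\f\in L^{2}$. For the $L^{2}$ part of $w$ I would use the coercivity encoded in $(\textbf{H}_{\a_{2}})$: when $\a_{1}\leq 3/2$ the function $V$ is bounded below and, together with its quadratic vanishing at $\p_{0}$, controls $\||u|^{2}-\p_{0}\|_{L^{2}}$; when $\a_{1}>3/2$ the lower bound $V(r)\gtrsim r^{\a_{2}}$ at infinity supplies the missing higher-integrability. Writing $|u|^{2}-\p_{0}=(|\f|^{2}-\p_{0})+2\re(\bar\f w)+|w|^{2}$ and using that $|\f|^{2}-\p_{0}\in L^{2}$ with $|\f|$ bounded below away from its zeros, this translates into a bound on $\|w\|_{L^{2}}$. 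Combining the two pieces gives $\|w(t)\|_{H^{1}}\leq C(\me(u_{0}),\f)$ uniformly in $t$; inserting this into the blow-up alternative forces $T_{\max}=\infty$, and continuous dependence on $w_{0}$ follows from the Lipschitz estimates of the contraction together with the same a priori control.

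I expect the genuine difficulty to lie in the last two steps rather than in the local theory: rigorously justifying energy conservation when $u\notin L^{2}$, and extracting a clean $H^{1}$ bound from a potential that is only weakly coercive in the sense of $(\textbf{H}_{\a_{2}})$. One must carefully track how the non-vanishing background interacts with the renormalized energy, and control the borderline cases $\a_{1}=3/2$ and $\a_{1}=\a_{1}^{*}$, where either the coercivity or the Strichartz exponents degenerate.
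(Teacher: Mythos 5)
First, a point of orientation: this statement is Gallo's theorem, which the paper cites but does not reprove; the paper's own contribution (Theorem 1.2 and its corollary) is to redo a single step of Gallo's argument, namely the justification of energy conservation, without any approximation procedure. Your outline follows Gallo's original route — Duhamel fixed point in Strichartz spaces, energy conservation justified by regularizing and passing to the limit (you correctly identify this as the step the present paper is designed to streamline), then an a priori bound plus the blow-up alternative — so structurally it matches the cited proof rather than the paper's new one.

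However, there is a genuine gap in your third step: the claim that energy conservation yields $\|w(t)\|_{H^{1}}\leq C(\me(u_{0}),\f)$ \emph{uniformly in time}. The energy controls $\|\n u(t)\|_{L^{2}}$, hence $\|\n w(t)\|_{L^{2}}$, and (through the coercivity of $V$ near $\p_{0}$ and $(\textbf{H}_{\a_{2}})$) a quantity of the type $\||u(t)|^{2}-\p_{0}\|_{L^{2}}$; but it does \emph{not} control $\|w(t)\|_{L^{2}}$. Writing $|u|^{2}-\p_{0}=(|\f|^{2}-\p_{0})+2\re(\bar{\f}w)+|w|^{2}$, the last two terms can cancel identically for large $w$: for instance $w=-2\f$ on a large region gives $u=-\f$ there, so $|u|^{2}-\p_{0}$ is unaffected while $\|w\|_{L^{2}}$ is huge; moreover $\f$ may vanish (vortex-type profiles), so ``$|\f|$ bounded below away from its zeros'' cannot be used to invert the relation. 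This is a real feature of the problem: the $L^{2}$ distance to the background is not an energy-controlled quantity. The correct statement — which is Gallo's Lemma 3.3, referred to in Remark 1.2 of this paper as ``boundedness of the $H^{1}$ norm of $w$ on bounded intervals'' — is obtained by a separate Gronwall argument: using the equation, $\frac{d}{dt}\|w(t)\|_{L^{2}}^{2}=2\im\langle F(w(t)),\overline{w(t)}\rangle$, one estimates the right-hand side via the nonlinear decompositions (the analogues of Lemmas 3.1--3.4 here) and the already-known bound on $\|\n w\|_{L^{2}}$, and concludes that $\|w(t)\|_{L^{2}}$ grows at most exponentially on bounded intervals. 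Since the local existence time depends only on $\|w_{0}\|_{H^{1}}$, this weaker bound still rules out finite-time blow-up, so your overall strategy survives; but the uniform-in-time bound as you stated it is false, and the $L^{2}$ step needs this replacement.
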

Generally, we take two steps to construct a time global solution for the Cauchy problem of usual nonlinear Schr{\"o}dinger equations ((NLS)s) (see \cite{bib006}). The first step is to construct a time local solution to Duhamel's integral equation by using a contraction argument. The next step is to extend the solution to the time global solution by using conservation laws. For Cauchy problem (\ref{gp}), we follow the same steps stated above. Thus, to get time global solutions, it is important to obtain conservation laws. We obtain formally the conservation law of energy by multiplying the equation (\ref{gp}) by $\bar{u}_{t}$, integrating over $\R^{n}$, and taking the real part. There are basically two methods to justify the procedure above. One is that solutions is approximated by a sequence of regular solutions, using the continuous dependence of solutions on the initial data. Other is to use a sequence of regularized equations of (\ref{gp}) whose solutions have enough regularities to perform the procedure above (see \cite{bib002}). However, these two methods involve a limiting procedure on approximate solutions. Instead, for (NLS)s with a local interaction nonlinearity, Ozawa \cite{bib002} derive conservation laws by using additional properties of solutions provided by Strichartz estimates. We need the following definitions to mention it:
\begin{dei}
\begin{enumerate}
\item A positive exponent $p'$ is called the dual exponent of $p$ if $p$ and $p'$ satisfy $1/p+1/p'=1$. \\
\item A pair of two exponents $(p,q)$ is called an admissible pair if $(p.q)$ satisfies .
\[
\frac{2}{p}+\frac{n}{q}=\frac{n}{2},\quad p \geq 2,\quad (p,q) \neq (2,\infty). 
\]
\end{enumerate}
\end{dei}
Strichartz estimates are described as the following lemma:
\begin{lem}[Strichartz estimates \cite{bib006}]
Let $(p_{1},q_{1})$ and $(p_{2},q_{2})$ be admissible pairs. Then
\begin{enumerate}
\item for all $f\in L^{2}(\R^{n})$,
\[
\|e^{it\D}f\|_{L^{p_{1}}(\R, L^{q_{1}}(\R^{n}))} \leq C\|f\|_{L^{2}(\R^{n})},
\]
\item let $T>0$, for all $f\in L^{p_{1}'}([0,T],L^{q_{1}'}(\R^{n}))$,
\[
\left\|\int_{-\infty}^{t}e^{i(t-\ta)\D}f(\ta)d\ta \right\|_{L^{p_{2}}([0,T],L^{q_{2}}(\R^{n}))} \leq C\|f\|_{L^{p'_{1}}([0,T],L^{q'_{1}}(\R^{n}))},
\]
where $p'_{1}$ and $p'_{2}$ are the dual exponents of $p_{1}$ and $p_{2}$, respectively.
\end{enumerate}
\end{lem}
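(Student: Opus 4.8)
The plan is to derive both estimates from a single analytic input — the dispersive decay of the free propagator — by the classical $TT^*$ method. I would start from the two endpoint facts $\|e^{it\D}f\|_{L^2(\R^n)}=\|f\|_{L^2(\R^n)}$ (Plancherel, unitarity of $e^{it\D}$) and $\|e^{it\D}f\|_{L^\infty(\R^n)}\leq C|t|^{-n/2}\|f\|_{L^1(\R^n)}$, the latter coming from the explicit oscillatory Gaussian kernel of $e^{it\D}$. Riesz--Thorin interpolation between these two bounds then yields, for every $q\geq 2$ with dual exponent $q'$,
\[
\|e^{it\D}f\|_{L^q(\R^n)}\leq C|t|^{-n(1/2-1/q)}\|f\|_{L^{q'}(\R^n)}.
\]

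For (i) I would set $Tf:=e^{it\D}f$ and note that, by duality, the bound $\|Tf\|_{L^{p_1}(\R,L^{q_1})}\leq C\|f\|_{L^2}$ is equivalent to the boundedness of $TT^{*}F=\int_{\R}e^{i(t-\ta)\D}F(\ta)\,d\ta$ from $L^{p_1'}(\R,L^{q_1'})$ into $L^{p_1}(\R,L^{q_1})$. Taking the inner spatial norm, applying Minkowski's inequality and then the interpolated decay estimate reduces matters to the one-dimensional convolution inequality
\[
\Bigl\|\int_{\R}|t-\ta|^{-2/p_1}\,g(\ta)\,d\ta\Bigr\|_{L^{p_1}(\R)}\leq C\|g\|_{L^{p_1'}(\R)},
\]
where the exponent $2/p_1$ appears precisely because the admissibility relation $2/p_1+n/q_1=n/2$ identifies $n(1/2-1/q_1)$ with $2/p_1$. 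This is exactly the Hardy--Littlewood--Sobolev inequality, whose scaling balance is thereby automatic and which is valid as long as $0<2/p_1<1$, that is, $p_1>2$.

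The inhomogeneous estimate (ii) I would treat in two stages. First I establish its \emph{untruncated} version, with $\int_{-\infty}^{t}$ replaced by $\int_{\R}$, by factoring through $L^2$: writing $\int_{\R}e^{i(t-\ta)\D}F(\ta)\,d\ta=e^{it\D}\,g$ with $g:=\int_{\R}e^{-i\ta\D}F(\ta)\,d\ta$, the dual of the homogeneous estimate for the pair $(p_1,q_1)$ gives $\|g\|_{L^2}\leq C\|F\|_{L^{p_1'}(\R,L^{q_1'})}$, while the homogeneous estimate for $(p_2,q_2)$ gives $\|e^{it\D}g\|_{L^{p_2}(\R,L^{q_2})}\leq C\|g\|_{L^2}$; composing the two delivers the untruncated bound. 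The genuine obstacle is the passage from the full integral to the \emph{retarded} one $\int_{-\infty}^{t}$, since the cutoff $\{\ta<t\}$ is not a continuous operation on these mixed-norm spaces. I would dispose of it with the Christ--Kiselev lemma, which transfers boundedness of the full operator to its retarded counterpart exactly when the output time-exponent strictly dominates the input one, $p_2>p_1'$.

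The real difficulty of the whole statement is concentrated at the endpoint $p=2$: there the Hardy--Littlewood--Sobolev step degenerates ($2/p=1$) and the Christ--Kiselev step degenerates ($p_2=p_1'$), so the elementary argument above covers only the range $p_1,p_2>2$, and recovering the estimate down to $p=2$ requires the substantially harder bilinear/atomic decomposition of Keel--Tao. This is also what forces the exclusion of the pair $(2,\infty)$, for which the homogeneous estimate genuinely fails, and it is the reason I expect the endpoint — rather than the dispersive input or the convolution bookkeeping — to be the main source of technical work.
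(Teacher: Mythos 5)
Your proposal cannot be compared against an in-paper argument, because the paper offers none: this lemma is quoted verbatim from Cazenave's book \cite{bib006} as a known black box, and nothing in the paper depends on how it is proved. What you have written is the standard modern proof, and it is essentially correct: unitarity on $L^{2}$ plus the dispersive bound $\|e^{it\D}f\|_{L^{\infty}}\leq C|t|^{-n/2}\|f\|_{L^{1}}$, Riesz--Thorin to get the $L^{q'}\to L^{q}$ decay, the $TT^{*}$ reduction, Hardy--Littlewood--Sobolev for the one-dimensional time convolution (where admissibility makes $n(1/2-1/q_{1})=2/p_{1}$ exactly the right exponent), factoring the untruncated inhomogeneous operator through $L^{2}$ by composing an estimate with a dual estimate, and Christ--Kiselev to pass to the retarded integral when $p_{2}>p_{1}'$. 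Two points deserve attention. First, a small omission: the pair $(\infty,2)$ must be handled separately by unitarity, since $2/p_{1}=0$ falls outside the HLS range; this is trivial but should be said. Second, your closing remark that the endpoint $p=2$ is where the real work lies (Keel--Tao) is not merely a scholarly aside here: the paper's chosen admissible pair for $n=4$ is $(p,q)=(2,4)$, which \emph{is} the endpoint pair $(2,2n/(n-2))$ in four dimensions, so the version of the lemma this paper actually invokes genuinely requires the endpoint theorem (both homogeneous and retarded forms), not just the elementary range $p_{1},p_{2}>2$ that your HLS/Christ--Kiselev scheme covers; note also that at the double endpoint $p_{1}=p_{2}=2$ the Christ--Kiselev transfer degenerates and the retarded estimate must be taken from Keel--Tao directly. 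Your proof is therefore a correct and self-contained substitute for the citation in dimensions $2$ and $3$, but for the paper's $n=4$ case it still rests on quoting the endpoint result.
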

In this paper, for the equation (\ref{gp}) in $n=2$, $3$, $4$, we derive the conservation law for time local solutions without approximating procedure. Instead of that, we use Ozawa's idea \cite{bib002}. Note that when $n=1$,  because $H^{1} \hookrightarrow L^{\infty}$, Gallo \cite{bib007} derived it without approximating procedure (see section 2, 3 in Gallo \cite{bib007}), and that for $n \geq 2$, Gallo \cite{bib007} derive it using the approximate argument (see section 5 in Gallo \cite{bib007}). We follow Ozawa's idea, however, we can not derive the conservation law only by Ozawa's idea, due to the nonlinearity and the space of a solution. We derive the conservation law to combine Ozawa's idea with decomposing the nonlinear term by applying the method for the decomposition of Schr{\"o}dinger operator in G{\'e}rard \cite{bib001}. Moreover, we remove some of technical assumptions of the nonlinearity necessary to derive the conservation law. Our main result is as follows.
\begin{thm} \label{thm:01}
Let $n=2$, $3$, $4$. Let $\rho_{0}>0$, and $f\in C^{2}(\R_{+})$ satisfying $(\textbf{H}_{f})$.
Moreover, we assume that there exist $\a_{1} \geq 1$, with a supplementary condition $\a_{1} <\a_{1}^{*}$ if $n=3$, $4$ ($\a_{1}^{*}=3$ if $n=3$, $\a_{1}^{*}=2$ if $n=4$) such that
\begin{flalign*}
& \hspace{2cm} \E{C_{0}>0},\; s.t.\; \A{r} \geq 1,\; |f^{(k)}(r)| \leq C_{0}r^{\a_{1}-1-k}\; (k=1,2). & (\textbf{H}_{\a_{1}}')
\end{flalign*}
Let $\f$ be a function satisfying 
\begin{flalign*}
& \hspace{2cm} \f\in C_{b}^{2}(\R^{n}),\quad \n \f \in H^{2}(\R^{n})^{n},\quad |\f|^{2}-\p_{0} \in L^{2}(\R^{n}). & (\textbf{H}'_{\f})
\end{flalign*}
(Note that such function $\f$ is called as a regular function of finite energy.) Let $w \in C([0,T],H^{1}(\R^{n}))$  be a mild solution of the integral equation
\begin{align}
 w(t)=e^{it\D}w_{0}- i \int_{0}^{t}e^{i(t-t')\D}F(w(t'))dt' \label{eq:01}
\end{align}
for some $w_{0}\in H^{1}$ and $T>0$, where $F(w):=-\D \f-f(|\f+w|^2)(\f+w)$.

Then $\me(w(t))=\me(w_{0})$ for all $t \in [0,T]$, where
\[
\me(w):=\int_{\R^{n}}|\n (\f+w)|^{2}dx+\int_{\R^{n}}V(|\f+w|^{2})dx,
\]
and 
\[
V(r):=\int_{r}^{\rho_{0}}f(s)ds.
\]
\end{thm}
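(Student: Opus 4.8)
The plan is to show that $t \mapsto \me(w(t))$ is differentiable on $[0,T]$ with vanishing derivative; since this map is continuous (the kinetic term is continuous on $H^1$, and the potential term $\int_{\R^n} V(|\f+w|^2)\,dx$ is finite and continuous on $\f+H^1$ by $(\mathbf{H}'_{\a_1})$ together with $V(\p_0)=V'(\p_0)=0$ and $|\f+w|^2-\p_0\in L^2$), constancy follows and yields $\me(w(t))=\me(w_0)$. Writing $u:=\f+w$ and using that $\f$ is time independent, \eqref{eq:01} is the Duhamel formula for $iu_t=-\D u-f(|u|^2)u$. Formally, differentiating $\me$ and inserting the equation gives $\frac{d}{dt}\me=2\re\langle iu_t,u_t\rangle=0$, since $\me'(u)=-2\D u-2f(|u|^2)u=2iu_t$; the whole difficulty is that $u_t$ lies only in $H^{-1}$ while $\me'(u)$ is also only in $H^{-1}$, so this pairing is not justified by duality, and moreover $f(|u|^2)u\notin H^1$ in general (already for $n=4$ the model term $|u|^2\n u$ fails to be $L^2$).

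First I would run a Strichartz bootstrap on \eqref{eq:01}. Using $w\in C([0,T],H^1)$, the Strichartz estimates of \cite{bib006}, and $(\mathbf{H}'_{\a_1})$, I would show that $w$ and $\n w$ lie in $L^{p}([0,T],L^{q})$ for a suitable admissible pair $(p,q)$ with $p>2$. The decisive nonlinear estimate rests on the decomposition of the nonlinear term in the spirit of G\'erard \cite{bib001}: split $f(|u|^2)u$ into a contribution near the background, where a Taylor expansion around $\p_0$ (recall $f(\p_0)=0$) produces a term controlled in $L^2$, and a large–amplitude contribution estimated through the polynomial bounds $(\mathbf{H}'_{\a_1})$ and placed in the dual space $L^{p'}([0,T],L^{q'})$. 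This gives $N:=\D\f+f(|u|^2)u$ with $N,\n N\in L^{p'}([0,T],L^{q'})$ and, differentiating \eqref{eq:01}, $w\in C^1([0,T],H^{-1})$ with $iu_t=-\D u-f(|u|^2)u$ in $H^{-1}$.

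For the main computation I would avoid differentiating $\me$ directly and instead work with increments. The group law applied to \eqref{eq:01} gives $u(t)-u(s)=(e^{i(t-s)\D}-1)\,w(s)+i\int_s^t e^{i(t-\ta)\D}N\,d\ta$. Expanding $\|\n u(t)\|_{L^2}^2-\|\n u(s)\|_{L^2}^2$ and, via a pointwise Taylor expansion of $V$ with $V'=-f$, the increment $\int V(|u(t)|^2)-\int V(|u(s)|^2)$, the part of the kinetic increment coming from $\n\f$ is handled cleanly because $\n\f\in H^2$ gives $\|(e^{-i(t-s)\D}-1)\n\f+i(t-s)\D\n\f\|_{L^2}=o(t-s)$, so this piece is $O(t-s)$ with an explicit derivative. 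The remaining cross terms from the kinetic and potential increments are then grouped and rewritten using $iu_t=-\D u-f(|u|^2)u$; in this combination the a priori ill-defined pairing $\langle \D u,f(|u|^2)u\rangle$ — precisely the obstruction reflecting $f(|u|^2)u\notin H^1$ — cancels exactly, leaving only pairings of $\n u$ and $f(|u|^2)u$ against $e^{i(t-\ta)\D}\n N$ and $e^{i(t-\ta)\D}N$, which are finite by Strichartz duality and converge to the correct value by the Lebesgue differentiation theorem. The quadratic remainders $\|\int_s^t e^{i(t-\ta)\D}\n N\,d\ta\|_{L^2}^2$ and $\int f(|u(s)|^2)|u(t)-u(s)|^2\,dx$ are $o(t-s)$ thanks to the inhomogeneous Strichartz estimate and $p>2$. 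Summing, $\frac{d}{dt}\me(w(t))=0$.

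The main obstacle is the nonlinear analysis rather than the abstract scheme: one must make the G\'erard–type splitting quantitative enough that the growing factor $f(|u|^2)$, evaluated on a non-decaying $u\in\f+H^1$, still lands in the dual Strichartz space, which is exactly why the threshold $\a_1<\a_1^*$ and the case $n=4$ are delicate and force the choice $p>2$. The conceptual point is that the $H^1$–regularity of the nonlinearity — unavailable here — is never actually needed: the dangerous pairing occurs only in the kinetic/potential combination where it cancels, so that mere space–time integrability of $N$ and $\n N$ suffices to justify every manipulation and to render the remainders negligible.
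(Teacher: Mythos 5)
Your strategy is, in its essentials, the one the paper itself follows: pass to $u=\f+w$, exploit the unitarity of $e^{it\D}$ by expanding the Duhamel formula (Ozawa's device) so that the kinetic term is never differentiated against the nonlinearity in $H^{1}$, split $\wt{F}(w)=-f(|\f+w|^{2})(\f+w)$ \`a la G\'erard into a near-background part controlled in $L^{2}$ and a large-amplitude part in dual Strichartz spaces, and handle the potential term by a Taylor expansion of $V$ paired against difference quotients of $w$ (this is exactly Lemma \ref{llem:1}). One bookkeeping correction: the correct statement is $\wt{F}(w)\in L^{\infty}_{T}L^{2}+L^{\infty}_{T}L^{q'}$ and $\n\wt{F}(w)\in L^{\infty}_{T}L^{2}+L^{p'}_{T}L^{q'}$ (Lemmas \ref{lem01}, \ref{lem02}); your assertion ``$N,\n N\in L^{p'}([0,T],L^{q'})$'' is false as written, since the background contribution (e.g. $f(|\f|^{2})\f$) lies only in $L^{2}$ and never in $L^{q'}$, so every duality must be run in sum/intersection spaces as in Lemma \ref{lemf05}.

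The genuine gap is in how you close the argument. Your tools yield only an almost-everywhere derivative of $t\mapsto\me(w(t))$: the cross terms converge ``by the Lebesgue differentiation theorem'' only for a.e.\ $s$, and the quadratic remainder $\bigl\|\int_{s}^{t}e^{i(t-\ta)\D}\n N\,d\ta\bigr\|_{L^{2}}^{2}$ is $o(t-s)$ only at Lebesgue points: by the dual Strichartz estimate its $L^{p'}L^{q'}$-part is bounded by $\bigl(\int_{s}^{t}\|G_{2}(\ta)\|_{L^{q'}}^{p'}d\ta\bigr)^{2/p'}$, which for a general $L^{1}$ density (say behaving like $|\ta-s_{0}|^{-1/2}$ near some $s_{0}$) is \emph{not} $o(t-s_{0})$ at $s_{0}$, even though $2/p'>1$. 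But continuity plus a vanishing derivative almost everywhere does not imply constancy --- the Cantor function is the standard counterexample --- so your stated conclusion mechanism (``differentiable with vanishing derivative, hence constant since continuous'') is not delivered by your estimates. The paper circumvents precisely this: the kinetic term is computed by the \emph{global} expansion on $[0,t]$, where the square of the Duhamel integral is rewritten exactly by Fubini in \eqref{term:01}--\eqref{term:03}, so there are no $o(t-s)$ remainders at all and the identity \eqref{5eq:01} holds for every $t$; and the potential term is shown to belong to $W^{1,1}((0,T))$ (Lemma \ref{llem:1}), i.e.\ it is absolutely continuous with derivative $2\re\langle\wt{F}(w),\overline{\pa_{t}w}\rangle\in L^{1}$, the difference quotients being controlled through Lemma \ref{delt:1} and the local Lipschitz continuity \eqref{thm2f} of Theorem \ref{thm:1}. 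Conservation then follows from the fundamental theorem of calculus for $W^{1,1}$ functions. Your increment scheme can be repaired --- either by proving the same $W^{1,1}$/absolute-continuity statement, or by a telescoping argument over partitions in which the quadratic remainders sum to $o(1)$ by uniform integrability of $\ta\mapsto\|G_{2}(\ta)\|_{L^{q'}}^{p'}$ --- but that is exactly the missing step, not a routine detail.
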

\begin{rem}
Gallo \cite{bib007} prove the energy conservation law under $f\in C^{k}(\R_{+})$ ($k=3$ if $n=2$, $3$, $k=4$ if $n=4$) satisfying ($\textbf{H}_{f}$), $(\textbf{H}_{\a_{1}})$ and $(\textbf{H}_{\a_{2}})$ for some $\a_{1} \geq 1$ and $\a_{2}\in \R$ with $\a_{1}-\a_{2} \leq 1/2$, and $\f$ satisfying $(\textbf{H}_{\f})$, but we can prove it under $f\in C^{2}(\R_{+})$ with ($\textbf{H}_{f}$) and $(\textbf{H}'_{\a_{1}})$ for some $\a_{1} \geq 1$, and $\f$ with $(\textbf{H}'_{\f})$.
\end{rem}
\begin{rem}
For proofs of the a priori estimate of $f(|\f+w|^2)(\f+w)$ (that is Lemmas \ref{lem01} - \ref{lem04}, and Lemmas 4.1 - 4.4 in Gallo \cite{bib007}) and boundedness of $H^{1}$ norm of $w$ on bounded intervals (that is Lemma 3.3 in Gallo \cite{bib007}), we need that there exists $C_{\a_{1}}>0$ such that for any $r \geq 0$, 
\begin{align}
r^{1/2}|f^{(k)}(r)| \leq C_{\a}(1+r^{\max(0, \a_{1}-(2k+1)/2)})\quad (k=1,2), \label{ff:01}
\end{align}
where $1 \leq \a_{1}$ with the same supplementary condition in Theorem \ref{gallo:01}.
If $3/2<\a_{1} \leq 2$, then we can not obtain (\ref{ff:01}) from $(\textbf{H}_{\a_{1}})$ or $(\textbf{H}_{\a_{2}})$. Therefore by replacing $(\textbf{H}_{\a_{1}})$ with $(\textbf{H}_{\a_{1}}')$, we deduce (\ref{ff:01}) from $(\textbf{H}_{\a_{1}}')$ only. To show (\ref{ff:01}), we do not need $(\textbf{H}_{\a_{2}})$.
\end{rem}

Moreover, as a corollary to the main result, we can deduce a globally well-posedness of (\ref{gp}). Due to theorem \ref{thm:01}, we can remove a technical assumption of the nonlinear term. We have the following result:
\begin{cor}
Let $n=2$, $3$, $4$. We assume that $f$ and $\f$ satisfy the same assumptions as in Theorem \ref{thm:01}, with a supplementary assumption as $f$ satisfying $(\textbf{H}_{\a_{2}})$ for some $\a_{2}\in \R$ with $\a_{1}-\a_{2} \leq 1/2$. Then (\ref{gp}) is globally well-posed in $\f+H^{1}(\R^{n})$. That is, for any  $w_{0}\in H^{1}(\R^{n})$, there exist a unique $w \in C(\R, H^{1}(\R^{n}))$ such that $\f+w$ solves (\ref{gp}) with the initial data $w(0)=w_{0}$. Moreover, for any $T>0$, the flow map $w_{0} \mapsto w \; (H^{1} \rightarrow C([0,T], H^{1}))$ is Lipschitz continuous on the bounded sets of $H^{1}(\R^{n})$. The energy $\me(w)$ is conserved by the flow.
\end{cor}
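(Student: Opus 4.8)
The plan is to run the classical two-step scheme for (NLS) recalled in the introduction — local well-posedness, then globalization through a conserved quantity — following Gallo \cite{bib007}, the one change being that its global ingredient, the energy conservation law, is now furnished by Theorem \ref{thm:01}; this is precisely what lets the weaker hypotheses $(\textbf{H}_{f})$, $(\textbf{H}_{\a_{1}}')$ suffice, the supplementary $(\textbf{H}_{\a_{2}})$ being needed only for coercivity.

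Step 1 (local theory). Fixing $w_{0}\in H^{1}$, I would solve (\ref{eq:01}) by contracting the associated Duhamel map in a ball of $C([0,T_{0}],H^{1})\cap L^{p}([0,T_{0}],W^{1,q})$ for a suitable admissible pair $(p,q)$ and $T_{0}=T_{0}(\|w_{0}\|_{H^{1}})$. By the inhomogeneous Strichartz estimate this reduces to bounding $F(w)=-\D\f-f(|\f+w|^{2})(\f+w)$ and its differences in the dual space $L^{p'}([0,T_{0}],W^{1,q'})$, which is the content of Lemmas \ref{lem01}--\ref{lem04} and requires only $(\textbf{H}_{f})$ and $(\textbf{H}_{\a_{1}}')$. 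This produces, on $[0,T_{0}]$, a unique mild solution $w$ with $\f+w$ solving (\ref{gp}) and $w_{0}\mapsto w$ Lipschitz on bounded sets, $T_{0}$ depending only on $\|w_{0}\|_{H^{1}}$.

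Step 2 (a priori $H^{1}$ bound). This is the step I expect to be the main obstacle. By Theorem \ref{thm:01}, $\me(w(t))=\me(w_{0})$ on the existence interval, so it remains to turn the conserved energy into a bound on $\|w(t)\|_{H^{1}}$ despite the non-vanishing background. Writing $u=\f+w$, the kinetic part gives $\|\n w\|_{L^{2}}\le\|\n u\|_{L^{2}}+\|\n\f\|_{L^{2}}$ at once. For $\|w\|_{L^{2}}$ I would exploit the potential term: the assumption $(\textbf{H}_{\a_{2}})$ makes $V$ coercive for large argument (bounded below if $\a_{1}\le3/2$, and $V(r)\gtrsim r^{\a_{2}}$ otherwise), while $f'(\p_{0})<0$ gives $V(r)\sim(r-\p_{0})^{2}$ near $\p_{0}$. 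Through the identity $|u|^{2}-\p_{0}=(|\f|^{2}-\p_{0})+2\re(\bar{\f}w)+|w|^{2}$, with $|\f|^{2}-\p_{0}\in L^{2}$ and the cubic term $\|w\|_{L^{4}}^{2}$ absorbed by Gagliardo--Nirenberg ($n\le4$) together with the kinetic bound, this yields $\|w(t)\|_{H^{1}}\le C(\me(w_{0}),\f)$ uniform on bounded time intervals — the analogue of Lemma 3.3 in Gallo \cite{bib007}. The interplay between the coercivity supplied by $(\textbf{H}_{\a_{2}})$ and the non-coercive cubic term, which must be absorbed rather than dominated outright, is the delicate point.

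Step 3 (globalization and continuous dependence). Since the local time $T_{0}$ depends only on $\|w_{0}\|_{H^{1}}$ and Step 2 rules out finite-time blow-up of $\|w(t)\|_{H^{1}}$, a standard continuation argument extends $w$ to a unique solution in $C(\R,H^{1})$. For fixed $T>0$ I would then cover $[0,T]$ by finitely many local intervals on which the trajectories remain in one fixed bounded set of $H^{1}$ (by Step 2) and concatenate the local Lipschitz estimates of Step 1, obtaining Lipschitz dependence of $w_{0}\mapsto w$ on bounded sets of $H^{1}$. Conservation of $\me$ along the global flow is then immediate from Theorem \ref{thm:01} applied on each subinterval.
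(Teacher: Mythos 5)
Your overall architecture --- Gallo's local theory under the weakened hypotheses, Theorem \ref{thm:01} as the conservation input, an a priori $H^{1}$ bound on bounded time intervals, then continuation --- is exactly what the paper intends: it gives no written proof of the Corollary, and its Remark 1.2 delegates the boundedness of $\|w(t)\|_{H^{1}}$ on bounded intervals to Lemma 3.3 of Gallo \cite{bib007}, which is precisely where $(\textbf{H}_{\a_{2}})$ enters. Your Steps 1 and 3 match this. The genuine gap is in Step 2: the conserved energy, even with the coercivity supplied by $(\textbf{H}_{\a_{2}})$, cannot give a \emph{static} bound on $\|w(t)\|_{L^{2}}$, because the potential term depends only on $|u|^{2}=|\f+w|^{2}$ and is therefore blind to the relative phase between $\f$ and $w$: your identity controls at best $2\re(\bar{\f}w)+|w|^{2}=|u|^{2}-|\f|^{2}$ in $L^{2}$, which does not control $w$ itself. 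Concretely, let $\chi$ be a fixed smooth bump function and $w_{R}:=iR^{-n/4}\chi(\cdot/R)\f\in H^{1}$. Then $\me(w_{R})$ stays bounded as $R\to\infty$: the extra kinetic part is $O(R^{n/4-1})$, and $|\f+w_{R}|^{2}-\p_{0}=(1+R^{-n/2}\chi(\cdot/R)^{2})(|\f|^{2}-\p_{0})+\p_{0}R^{-n/2}\chi(\cdot/R)^{2}$ is bounded in $L^{2}\cap L^{\infty}$ uniformly in $R$, so the potential term is uniformly bounded as well; yet $\|w_{R}\|_{L^{2}}\sim R^{n/4}\to\infty$ since $|\f|\geq\sqrt{\p_{0}}/2$ near infinity. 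Hence no inequality of the form $\|w\|_{H^{1}}\leq C(\me(w),\f)$ can hold, and the cubic term cannot be ``absorbed'': the obstruction is structural, not technical.

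The repair is the one the paper points to: the $L^{2}$ bound must come from the dynamics, not from the energy functional alone. Since $i\pa_{t}w+\D w=F(w)$ in $H^{-1}$ (Theorem \ref{thm:2}), one has, after justification, $\frac{d}{dt}\|w(t)\|_{L^{2}}^{2}=2\im\langle F(w(t)),\bar{w}(t)\rangle$, and the key structural fact is that $\im\bigl[f(|u|^{2})u\bar{w}\bigr]=f(|u|^{2})\im[\f\bar{w}]$, i.e.\ the contribution quadratic in $w$ drops out; together with $|\im\langle \D\f,\bar{w}\rangle|\leq\|\n\f\|_{L^{2}}\|\n w\|_{L^{2}}$ this yields a Gronwall-type differential inequality for $\|w(t)\|_{L^{2}}^{2}$ whose coefficients are controlled by the conserved energy. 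It is exactly here --- in dominating $\int f(|u|^{2})\im[\f\bar{w}]\,dx$ by energy-controlled quantities, and also in bounding the negative part of $\int V(|u|^{2})\,dx$ so that the energy really controls $\|\n u\|_{L^{2}}$ (note $V$ has no sign away from $\p_{0}$ and the domain has infinite measure, so even your kinetic bound is not ``at once'') --- that $(\textbf{H}_{\a_{2}})$ with $\a_{1}-\a_{2}\leq 1/2$ is used, as in Gallo's Lemma 3.3. The resulting bound on $\|w(t)\|_{H^{1}}$ may grow with $T$ but is finite on every bounded interval, which, since the local existence time depends only on $\|w_{0}\|_{H^{1}}$, is all that your Step 3 needs.
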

The structure of this paper is as follows. In section 2, we introduce the previous results of Gallo \cite{bib007} on the local existence of solutions of (\ref{gp}). In section 3 and 4, we give estimates of the nonlinear term and results of the time-derivative term needed for the proof of the main result, respectively. In section 5, we prove the main result.
\begin{nota}
For a Banach space $X$, $T>0$ and $p \in [1, \infty]$, $L_{T}^{p}X$ denotes the Banach space $L^{p}([0,T], X)$ equipped with its natural norm.
\end{nota}

\section{Previous results}
For $n \leq 4$, Gallo \cite{bib007} prove the globally well-posedness of (\ref{gp}). We state the result for $n=2$, $3$, $4$. A first strategy of the proof is that (\ref{gp}) is transformed as follows to look for a solution of (\ref{gp}) under the form $\f+w$.
\begin{align}
\left\{
\begin{array}{l}
\ds i\frac{\partial w}{\partial t}+\Delta w=F(w(t)),\quad t\in (0,T),\;x\in\R^{n}, \\
w(0,x)=w_{0}(x),\quad x\in\R^{n},
\end{array}
\right. \label{gp1}
\end{align}
where
\[
F(w):=-\D \f-f(|\f+w|^2)(\f+w).
\]

In a next strategy, he prove that (\ref{gp1}) is locally well-posed in $H^{1}$ by using Strichartz estimates and a contraction argument for the map 
\[
 \qquad \P(w) =e^{it\D}w_{0}-i\int_{0}^{t}e^{i(t-s)\D}F(w(s))ds,
\]
in the space
\[
X_{T}:=L^{\infty}_{T}H^{1} \cap L^{p}_{T}W^{1,q}
\] 
equipped with its natural norm 
\[
\|w\|_{X_{T}}:=\|w\|_{L^{\infty}_{T}H^{1}}+\|w\|_{L^{p}_{T}W^{1,q}},
\]
where a pair $(p,q)$ is a admissible pair defined as $(p,q):= (6/n , 6)$ for $n=2$, $3$, $(p,q):= (2,4)$ for $n=4$.
We remark that Gallo \cite{bib007} takes $(p, q) := (4, 4)$ for $n=2$. Note that our choice also works for getting local existence of solution to (\ref{gp}). For locally well-posedness, Gallo \cite{bib007} prove the following theorem:
\begin{thm}[Gallo \cite{bib007}]\label{thm:1}
Let $n=2$, $3$, $4$. Let $\rho_{0}>0$, and $f\in C^{2}(\R_{+})$ satisfying $(\textbf{H}_{f})$.
Moreover, we assume that there exist $\a_{1} \geq 1$, with a supplementary condition $\a_{1} <\a_{1}^{*}$ if $n=3$, $4$ ($\a_{1}^{*}=3$ if $n=3$, $\a_{1}^{*}=2$ if $n=4$), and $\a_{2}\in \R$ with $\a_{1}-\a_{2} \leq 1/2$ such that $(\textbf{H}_{\a_{1}})$ and $(\textbf{H}_{\a_{2}})$. Let $\f$ be a function satisfying $(\textbf{H}_{\f}')$. 

Then for any $R>0$, there exists $T(R)>0$ such that for any $w_{0} \in H^{1}$ with $\|w_{0}\|_{H^{1}} \leq R$, there exists an unique solution $w \in X_{T(R)}$ of the integral equation (\ref{eq:01}). Moreover $w \in C([0,T(R)],H^{1})$. 

If $\wt{w} \in C([0,T],H^{1})$ solves (\ref{eq:01}) for some $T>0$, then $\wt{w} \in X_{T}$, and $\wt{w} \in X_{T}$ is the unique solution to (\ref{eq:01}) in $C([0,T],H^{1})$.

Also the flow map is locally Lipschitz continuous on the bounded sets of $H^{1}$, indeed for any $R>0$, there exists $T(R)>0$ such that for any $T' \in (0,T(R)]$ and $w_{0}$, $\wt{w}_{0} \in H^{1}$ with $\|w_{0}\|_{H^{1}} \leq R$ and $\|\wt{w}_{0}\|_{H^{1}} \leq R$, corresponding solutions $w$, $\wt{w} \in X_{T'}$ of (\ref{eq:01}) satisfy the following locally Lipschitz continuity: 
\begin{align}
 \|w-\wt{w}\|_{X_{T'}} \leq C\|w_{0}-\wt{w}_{0}\|_{H^{1}}, \label{thm2f}
\end{align}
where $C$ is a positive constant depending on $\|w\|_{X_{T'}}$ and $\|\wt{w}\|_{X_{T'}}$. Especially, for the same constant $C$, 
\[
 \|w-\wt{w}\|_{L^{\infty}_{T'}H^{1}} \leq C\|w_{0}-\wt{w}_{0}\|_{H^{1}}.
\]
Furthermore, the energy $\me(w(t))$ is conserved for all $t \in [0,T]$. 
\end{thm}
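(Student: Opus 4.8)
The plan is to obtain the existence, uniqueness, persistence, and Lipschitz statements by the standard Strichartz-based contraction scheme of \cite{bib006}, \cite{bib007}, and to address the final energy-conservation assertion separately. First I would fix $R > 0$ and, for a constant $M$ comparable to $R$ and $T > 0$ to be chosen, work in the closed ball $B = \{ w \in X_{T} : \|w\|_{X_{T}} \leq M \}$, which is complete for the metric induced by $\|\cdot\|_{X_{T}}$. The aim is to show that, for $T = T(R)$ small enough, the map $\P(w) = e^{it\D}w_{0} - i\int_{0}^{t} e^{i(t-s)\D} F(w(s))\,ds$ sends $B$ into itself and is a strict contraction there; Banach's fixed-point theorem then produces the unique fixed point $w \in X_{T}$, which is by construction a solution of (\ref{eq:01}).

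The heart of the matter, and the step I expect to be hardest, is the pair of nonlinear estimates driving the scheme: a self-mapping bound for $\|F(w)\|_{L^{p'}_{T}W^{1,q'}}$ and a difference bound for $\|F(w) - F(\wt{w})\|_{L^{p'}_{T}W^{1,q'}}$, where $(p,q)$ is the admissible pair fixed in Section 2. The difficulty lies in the structure of $F(w) = -\D\f - f(|\f+w|^{2})(\f+w)$: since solutions are sought in $\f + H^{1}$ with $\f$ a non-decaying regular function of finite energy, the argument $\f + w$ is not in $H^{1}$, and the usual homogeneous nonlinear estimates fail. I would split the nonlinear term according to whether $|\f + w|$ is comparable to $\sqrt{\p_{0}}$ or large. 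Near the background, $(\textbf{H}_{f})$ and a Taylor expansion of $f$ about $\p_{0}$ control the contribution; in the large-amplitude region, the growth bounds $(\textbf{H}_{\a_{1}})$---equivalently the pointwise control (\ref{ff:01})---on $f', f''$ dominate the high powers, and the supplementary constraints $\a_{1} < \a_{1}^{*}$ for $n = 3, 4$ are precisely what keep the ensuing Sobolev--H\"older exponents admissible. These are the estimates collected in Lemmas \ref{lem01}--\ref{lem04} and in Gallo \cite{bib007}; combined with the Strichartz estimates they yield the self-mapping and contraction properties of $\P$, with the nonlinear contribution carrying a gain $T^{\t}$, $\t > 0$, that is made small by shrinking $T(R)$.

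Granting the fixed point $w \in X_{T}$, the continuity $w \in C([0,T], H^{1})$ follows since both $e^{it\D}w_{0}$ and the Duhamel term take values continuously in $H^{1}$, the latter again via Strichartz applied to $F(w) \in L^{p'}_{T}W^{1,q'}$. For the persistence and uniqueness claim I would take any $\wt{w} \in C([0,T], H^{1})$ solving (\ref{eq:01}), partition $[0,T]$ into finitely many short subintervals on each of which the contraction argument applies, and bootstrap to deduce $\wt{w} \in X_{T}$ and its agreement with the constructed solution; uniqueness in $C([0,T], H^{1})$ then reduces to uniqueness in $X_{T}$. The Lipschitz bound (\ref{thm2f}) is obtained by inserting the difference estimate for $F$ into the Duhamel formula: one controls $\|w - \wt{w}\|_{X_{T'}}$ by $C\|w_{0} - \wt{w}_{0}\|_{H^{1}} + C\,(T')^{\t}\|w - \wt{w}\|_{X_{T'}}$ and absorbs the last term for $T' \leq T(R)$ small, the $L^{\infty}_{T'}H^{1}$ bound being the restriction to the first component of the $X_{T'}$ norm.

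It remains to address the energy conservation $\me(w(t)) = \me(w_{0})$. Under the present hypotheses Gallo \cite{bib007} establishes it through a regularization and approximation procedure, passing to the limit via the continuous dependence just proved; this is the route I would follow to keep the statement self-contained under $(\textbf{H}_{\a_{1}})$ and $(\textbf{H}_{\a_{2}})$. I would note, however, that whenever the stronger growth condition $(\textbf{H}_{\a_{1}}')$ holds, the present paper's Theorem \ref{thm:01} supplies the same conclusion without any limiting procedure---by following Ozawa's idea \cite{bib002} of exploiting the extra integrability furnished by the $L^{p}_{T}W^{1,q}$ component of $X_{T}$, combined with the nonlinear decomposition modeled on G{\'e}rard \cite{bib001}, to differentiate $\me(w(t))$ in time and show the derivative vanishes. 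I expect the main labor there, beyond the nonlinear estimates above, to be the careful bookkeeping in that time differentiation, which is deferred to Section 5.
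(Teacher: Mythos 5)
Your proposal is correct and follows essentially the same route as the paper, which does not reprove this statement but cites Gallo \cite{bib007}: a Strichartz-based contraction for $\P$ in the ball of $X_{T}=L^{\infty}_{T}H^{1}\cap L^{p}_{T}W^{1,q}$, nonlinear estimates obtained by expanding $f$ around the non-vanishing background and invoking $(\textbf{H}_{\a_{1}})$ for large amplitudes (the content of Lemmas \ref{lem01}--\ref{lem04} and Gallo's Lemmas 4.1--4.4), and the energy conservation handled by Gallo's approximation/regularization argument rather than the present paper's direct method. Your closing remark that Theorem \ref{thm:01} replaces that limiting procedure under $(\textbf{H}_{\a_{1}}')$ is exactly the point of this paper.
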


\begin{rem}
To obtain the local existence theorem above, it seems to be too much to assume both $(\textbf{H}_{\a_{1}})$ and $(\textbf{H}_{\a_{2}})$ for some $\a_{1} \geq 1$ and $\a_{2}\in \R$ with $\a_{1}-\a_{2} \leq 1/2$. Theorem \ref{thm:1} can be shown only assuming $(\textbf{H}_{\a_{1}}')$. To show only the local existence theorem, we do not need $(\textbf{H}_{\a_{2}})$.  
\end{rem}

From Theorem \ref{thm:1}, a local solution of (\ref{gp1}) is constructed as the following theorem:
\begin{thm} \label{thm:2}
Let $n=2$, $3$, $4$. Let $w_{0}\in H^{1}$. Let $T>0$ and let $w$ be a mild solution of the integral equation (\ref{eq:01}) with $w \in C([0,T],H^{1})$.
Then, for any $t_{0}\in [0,T]$, there exists $v(t_{0}) \in H^{-1}$ such that
\[
\frac{w(t_{0}+h)-w(t_{0})}{h} \rightarrow v(t_{0})\quad in\ H^{-1}\ as\ h \rightarrow 0.
\]
Moreover, denoting $v(t_{0})$ by $\pa_{t}w(t_{0})$, $w$ is a solution of (\ref{gp1}), indeed $w$ satisfies
\begin{enumerate}
\item $i\pa_{t}w(t)+\D w(t) = F(w(t))$ in $H^{-1}$ for all $t \in [0,T]$,
\item $w(0)=w_{0}$.
\end{enumerate}
\end{thm}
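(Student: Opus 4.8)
The plan is to prove that the mild solution is a strong solution in $H^{-1}$ by the classical semigroup argument: $e^{it\D}$ is a $C_{0}$-group on $H^{-1}(\R^{n})$ whose infinitesimal generator is $i\D$ with domain $H^{1}(\R^{n})$, and I would differentiate the Duhamel formula (\ref{eq:01}) directly. The whole statement then reduces to computing two limits in $H^{-1}$, one furnished by the generator and one by an average of the nonlinear forcing $F(w(\cdot))$.

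First I would establish the algebraic identity for the difference quotient. Fix $t_{0}\in[0,T]$ and $h$ with $t_{0}+h\in[0,T]$, and write $g(t'):=F(w(t'))$. Using the group law $e^{i(t_{0}+h-t')\D}=e^{ih\D}e^{i(t_{0}-t')\D}$ together with the splitting $\int_{0}^{t_{0}+h}=\int_{0}^{t_{0}}+\int_{t_{0}}^{t_{0}+h}$ in (\ref{eq:01}), the terms built from $w_{0}$ and from $\int_{0}^{t_{0}}$ reassemble exactly into $w(t_{0})$, giving
\[
w(t_{0}+h)-w(t_{0})=(e^{ih\D}-I)\,w(t_{0})-i\int_{t_{0}}^{t_{0}+h}e^{i(t_{0}+h-t')\D}g(t')\,dt',
\]
valid for both signs of $h$ (with the usual orientation convention on the integral). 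Dividing by $h$ separates the difference quotient into a linear piece $h^{-1}(e^{ih\D}-I)w(t_{0})$ and an averaged piece $-ih^{-1}\int_{t_{0}}^{t_{0}+h}e^{i(t_{0}+h-t')\D}g(t')\,dt'$.

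Next I would pass to the limit $h\to0$ in $H^{-1}$. Since $w(t_{0})\in H^{1}$ and $H^{1}$ is precisely the domain in $H^{-1}$ of the generator $i\D$, the linear piece tends to $i\D w(t_{0})$ by definition of the generator. For the averaged piece, the substitution $t'=t_{0}+s$ turns it into $-ih^{-1}\int_{0}^{h}e^{i(h-s)\D}g(t_{0}+s)\,ds$, and if $g$ is continuous into $H^{-1}$ at $t_{0}$ then the strong continuity of $e^{is\D}$ bounds its distance from $-ig(t_{0})$ by $|h|^{-1}\int_{0}^{h}\|e^{i(h-s)\D}g(t_{0}+s)-g(t_{0})\|_{H^{-1}}\,ds\to0$. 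Hence the limit $v(t_{0}):=i\D w(t_{0})-iF(w(t_{0}))$ exists in $H^{-1}$; reading off signs gives $i\pa_{t}w(t_{0})+\D w(t_{0})=F(w(t_{0}))$ in $H^{-1}$, and $w(0)=w_{0}$ is immediate from (\ref{eq:01}) at $t=0$.

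The hard part is the remaining input, namely the continuity $F(w(\cdot))\in C([0,T],H^{-1})$. The term $-\D\f$ is constant in time and lies in $L^{2}\subset H^{-1}$ (because $\n\f\in H^{2}$), so everything hinges on the Nemytskii map $N(\cdot):=f(|\f+\cdot|^{2})(\f+\cdot)$. One cannot control $N(w)$ by bare powers of $\f+w$, since the background $\f$ does not decay; instead I would use $f(\p_{0})=0$ and $|\f|^{2}-\p_{0}\in L^{2}$ to expand $f(|\f+w|^{2})$ about the value $\p_{0}$, exactly the decomposition underlying the a priori estimates of Section~3 (Lemmas \ref{lem01}--\ref{lem04}). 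Combined with the growth bounds $(\textbf{H}_{\a_{1}}')$ and the Sobolev embedding $H^{1}\hookrightarrow L^{2^{*}}$, this shows that $N$ maps bounded sets of $H^{1}$ continuously into $L^{2n/(n+2)}\hookrightarrow H^{-1}$, the subcritical restrictions $\a_{1}<3$ $(n=3)$ and $\a_{1}<2$ $(n=4)$ being used precisely here (for $n=2$ no such restriction is needed). Continuity of $t\mapsto N(w(t))$ in $H^{-1}$ then follows from $w\in C([0,T],H^{1})$ and the $C^{2}$ regularity of $f$, which closes the argument.
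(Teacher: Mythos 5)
Your proof is correct, and it supplies an argument the paper itself omits: Theorem \ref{thm:2} is stated in Section 2 as a ``previous result'' (it is imported from Gallo's local theory) with no proof given, so there is nothing in the paper to compare against except its supporting machinery. Your semigroup computation is the standard one and is sound: the group identity reduces the difference quotient to $h^{-1}(e^{ih\D}-I)w(t_{0})$ plus an average of $F(w(\cdot))$, the first limit is exactly the statement that $H^{1}$ is the domain of the generator $i\D$ of $e^{it\D}$ on $H^{-1}$, and the second limit only needs $F(w(\cdot))\in C([0,T],H^{-1})$. That last ingredient is precisely what the paper does establish, in Remark \ref{lem:cf}, using the decomposition Lemmas \ref{lem01} and \ref{lem03}; your plan for the Nemytskii map invokes the same decomposition, so on this point you and the paper coincide. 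One small imprecision worth fixing: the correct target space for $f(|\f+w|^{2})(\f+w)$ is the sum $L^{2}+L^{q'}$ (with $q'$ dual to the paper's admissible exponent $q$), not $L^{2n/(n+2)}$. The background contribution $\wt{F}_{1}(w)$, e.g.\ $f(|\f|^{2})(\f+w)$, does not decay and lies only in $L^{2}$ (via $f(\p_{0})=0$ and $|\f|^{2}-\p_{0}\in L^{2}$), and for $n=2$ your formula degenerates to $L^{1}$, which does \emph{not} embed into $H^{-1}$; the paper's choice $q'=6/5$ for $n=2,3$ (resp.\ $4/3$ for $n=4$) is what makes the dual Sobolev embedding $L^{q'}\hookrightarrow H^{-1}$ available. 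Since you defer to Lemmas \ref{lem01}--\ref{lem04} anyway, this does not affect the validity of your argument, only the stated exponent.
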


\section{The estimates of nonlinear terms}
In what follows, we put $\wt{F}(w) = -f(|\f+w|^{2})(\f+w)$. Applying directly the decomposition of $F(w)$ that Gallo \cite{bib007} used, we can deduce the following decompositions for $\wt{F}(w)$. Note that we can show Lemmas \ref{lem01} - \ref{lem04} by applying the same method to $\wt{F}(w)$ as corresponding lemmas for $F(w)$ in Gallo \cite{bib007}. The statements of lemma \ref{lem01} and Lemma \ref{lem03} is slightly different from these lemmas in Gallo \cite{bib007}. Therefore we prove them in appendix.
\begin{lem} \label{lem01}
Let $T>0$. For any $w \in X_{T}$, there exist
\[
  \wt{F}_{1}(w) \in L^{\infty}_{T}L^{2}, \qquad \wt{F}_{2}(w) \in L^{\infty}_{T}L^{q'}
\]
such that 
\[
  \wt{F}(w)=\wt{F}_{1}(w)+\wt{F}_{2}(w).
\]
Moreover it follows that 
\begin{align}
&\|\wt{F}_{1}(w)\|_{L^{\infty}_{T}L^{2}}+\|\wt{F}_{2}(w)\|_{L^{\infty}_{T}L^{q'}}  \nonumber \\
&\quad  \leq C(1+\|w\|_{L^{\infty}_{T}L^{2}})+C(\|w\|_{L^{\infty}_{T}H^{1}}^2+\|w\|_{L^{\infty}_{T}H^{1}}^{\max(2, 2\a_{1}-1)}),
\end{align}
where $C$ is a positive constant depending on $T$. 
Also for a same decomposition of $\wt{F}(w)$ in the above, we have $\wt{F}_{2}(w) \in L^{p}_{T}L^{2}$ and  
\begin{align*}
\|\wt{F}_{2}(w)\|_{L^{p}_{T}L^{2}} &\leq C(\|w\|^{2}_{L^{\infty}_{T}H^{1}}+\|w\|^{\max(2, 2\a_{1}-1)}_{X_{T}}),  
\end{align*}
where $C$ is a positive constant depending on $T$. Thus $F_{2}(w) \in L^{p}_{T}L^{2}$.
\end{lem}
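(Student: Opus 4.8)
The plan is to imitate, at the level of $\wt F(w)=-f(|\f+w|^{2})(\f+w)$, the operator decomposition of G{\'e}rard \cite{bib001}: set $u=\f+w$ and separate the affine-in-$w$ part of $\wt F$ (which will land in $L^{2}$) from the genuinely nonlinear remainder (which will land in $L^{q'}$ and, after redistributing factors, in $L^{p}_{T}L^{2}$). Concretely I would put
\[
\wt F_{1}(w):=-f(|\f|^{2})\f-\bigl(f(|\f|^{2})\,w+2f'(|\f|^{2})\,\re(\bar\f w)\,\f\bigr),\qquad \wt F_{2}(w):=\wt F(w)-\wt F_{1}(w),
\]
so that $\wt F_{1}(w)$ is the value plus first variation at $w=0$ of $w\mapsto\wt F(w)$ and $\wt F_{2}(w)$ is its second-order Taylor remainder in $w$.

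For the $L^{\infty}_{T}L^{2}$ bound on $\wt F_{1}$ the inputs are $(\textbf{H}_{f})$ and $(\textbf{H}'_{\f})$. Since $\f\in C^{2}_{b}$ the quantity $|\f|^{2}$ is bounded, so $f,f',f''$ are bounded on its range; since $f(\p_{0})=0$ and $|\f|^{2}-\p_{0}\in L^{2}$, the mean value theorem gives $|f(|\f|^{2})|\le C\,\bigl||\f|^{2}-\p_{0}\bigr|\in L^{2}$, whence the background term $f(|\f|^{2})\f\in L^{2}$, producing the constant in the estimate. The two linear terms carry the bounded coefficients $f(|\f|^{2})$ and $f'(|\f|^{2})\,\f\,\bar\f$ times a single factor $w\in L^{\infty}_{T}L^{2}$, so they are bounded in $L^{\infty}_{T}L^{2}$ by $C\|w\|_{L^{\infty}_{T}L^{2}}$. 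Together this yields the $C(1+\|w\|_{L^{\infty}_{T}L^{2}})$ contribution.

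The heart of the matter is $\wt F_{2}$. First I would record the pointwise consequence of $(\textbf{H}'_{\a_{1}})$ used throughout: treating $r\le1$ by continuity of $f^{(k)}$ and $r\ge1$ by $(\textbf{H}'_{\a_{1}})$ gives, for $k=1,2$ and all $r\ge0$, the bound $r^{1/2}|f^{(k)}(r)|\le C(1+r^{\max(0,\a_{1}-(2k+1)/2)})$, that is (\ref{ff:01}); this is exactly where $(\textbf{H}'_{\a_{1}})$ alone replaces the pair $(\textbf{H}_{\a_{1}})$, $(\textbf{H}_{\a_{2}})$. Writing the remainder in integral Taylor form and grouping the second derivatives of $w\mapsto\wt F(w)$ yields the pointwise estimate $|\wt F_{2}(w)|\le C|w|^{2}\bigl(1+|\f|+|w|\bigr)^{\max(0,2\a_{1}-3)}$, the suprema of $|f'|,|f''|$ being taken over the segment joining $|\f|^{2}$ to $|u|^{2}$. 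As $\f\in L^{\infty}$, this is controlled by $C(|w|^{2}+|w|^{\max(2,2\a_{1}-1)})$. Taking $L^{q'}$ norms and using $\||w|^{m}\|_{L^{q'}}=\|w\|_{L^{mq'}}^{m}$ together with the Sobolev embedding of $H^{1}$ gives the $L^{\infty}_{T}L^{q'}$ bound by $C(\|w\|_{L^{\infty}_{T}H^{1}}^{2}+\|w\|_{L^{\infty}_{T}H^{1}}^{\max(2,2\a_{1}-1)})$. For the $L^{p}_{T}L^{2}$ bound I would instead distribute the $m=\max(2,2\a_{1}-1)$ factors of $w$ by H{\"o}lder in space so that exactly one factor is measured through $W^{1,q}$ and the rest through $H^{1}$; H{\"o}lder in time then places that single factor in $L^{p}_{T}$ and the others in $L^{\infty}_{T}$ (using the finiteness of $[0,T]$), giving the stated $C(\|w\|_{L^{\infty}_{T}H^{1}}^{2}+\|w\|_{X_{T}}^{\max(2,2\a_{1}-1)})$. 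Finally $-\D\f\in L^{2}$ since $\n\f\in H^{2}$ and is time-independent, so $F_{2}(w)=\wt F_{2}(w)\in L^{p}_{T}L^{2}$.

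I expect the decisive step to be the $L^{q'}$ control of the top-order term $|w|^{\,2\a_{1}-1}$: its $L^{q'}$ norm is finite only when the exponent $(2\a_{1}-1)q'$ lies in the permitted range $[2,2^{*}]$ of the embedding $H^{1}\hookrightarrow L^{\cdot}$, with $2^{*}=2n/(n-2)$ (every finite exponent being allowed when $n=2$). With $q'=6/5$ for $n=3$ and $q'=4/3$ for $n=4$ this requirement reads precisely $\a_{1}\le3$ and $\a_{1}\le2$, i.e. $\a_{1}<\a_{1}^{*}$; thus the supplementary condition on $\a_{1}$ is not a technical artifact but exactly the threshold at which this Sobolev embedding holds, while $n=2$ needs none. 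The remaining work is bookkeeping: verifying the embeddings at the borderline admissible pair $(p,q)=(2,4)$ for $n=4$, and checking that when $\a_{1}\le3/2$ the growth factor disappears so only the quadratic term $|w|^{2}$ survives, consistently with $\max(2,2\a_{1}-1)=2$.
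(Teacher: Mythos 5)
Your decomposition is exactly the one the paper uses in its appendix: $\wt F_{1}$ is the value plus first variation of $\wt F$ at $w=0$, $\wt F_{2}$ the Taylor remainder, and your estimates for $\wt F_{1}$ (mean value theorem with $f(\p_{0})=0$ and $|\f|^{2}-\p_{0}\in L^{2}$) and for the $L^{\infty}_{T}L^{q'}$ bound on $\wt F_{2}$ (pointwise bound $C|w|^{2}(1+|w|)^{\max(0,2\a_{1}-3)}$ followed by $H^{1}\hookrightarrow L^{2q'}$ and $H^{1}\hookrightarrow L^{q'\max(2,2\a_{1}-1)}$) coincide with the paper's, which quotes Gallo's Lemma 4.1 for these points. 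Your observation that the supplementary condition $\a_{1}<\a_{1}^{*}$ is precisely the Sobolev threshold for this step is also correct.

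The gap is in the $L^{p}_{T}L^{2}$ estimate of $\wt F_{2}$. Writing $m=\max(2,2\a_{1}-1)$, your scheme puts exactly one factor of $w$ through $W^{1,q}$ and the remaining $m-1$ factors through $H^{1}$, i.e.\ $\||w|^{m}\|_{L^{2}}\leq\|w\|_{L^{a}}^{m-1}\|w\|_{L^{b}}$ with $\frac{m-1}{a}+\frac{1}{b}=\frac{1}{2}$, $H^{1}\hookrightarrow L^{a}$, $W^{1,q}\hookrightarrow L^{b}$. For $n=3$ one has $q=6$, so at best $b=\infty$, and $H^{1}(\R^{3})\hookrightarrow L^{a}$ forces $a\leq 6$; hence $\frac{1}{2}=\frac{m-1}{a}+\frac{1}{b}\geq\frac{m-1}{6}$, i.e.\ $m\leq 4$, i.e.\ $\a_{1}\leq 5/2$. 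For $n=3$ and $\a_{1}\in(5/2,3)$ --- a range the lemma must cover --- no admissible choice of $(a,b)$ exists: the $m-1$ factors sent through $H^{1}$ cannot by themselves land in $L^{2}$ once $m>4$, and you cannot route a second factor through $W^{1,q}$ either, because H{\"o}lder in time would then require $w\in L^{2p}_{T}W^{1,q}$, which $X_{T}$ does not give. The paper (following Gallo's Lemma 4.2) avoids this by not concentrating the Strichartz gain on a single factor: it writes $\||w|^{m}\|_{L^{p}_{T}L^{2}}=\|w\|^{m}_{L^{pm}_{T}L^{2m}}$, chooses an \emph{intermediate} admissible pair $(s,r)$ with $W^{1,r}\hookrightarrow L^{2m}$ and $L^{s}_{T}\hookrightarrow L^{pm}_{T}$ --- possible exactly under the condition (\ref{ic3}), which for $n=3$ allows all $m<5$, i.e.\ $\a_{1}<3$ --- and then controls $\|w\|_{L^{s}_{T}W^{1,r}}\leq\|w\|^{1-\wt{\t}}_{L^{\infty}_{T}H^{1}}\|w\|^{\wt{\t}}_{L^{p}_{T}W^{1,q}}\leq C\|w\|_{X_{T}}$ by interpolation, so that every one of the $m$ factors carries a fraction $\wt{\t}$ of the $L^{p}_{T}W^{1,q}$ information. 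Your argument does go through for $n=2$ and $n=4$ (there $m<3$ and the one-factor split closes), but to cover $n=3$, $\a_{1}\in(5/2,3)$ you need this interpolation step, or something equivalent to it.
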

\begin{lem} \label{lem02}
Let $T>0$. For any $w \in X_{T}$, there exist
 \[
  \wt{G}_{1}(w) \in L^{\infty}_{T}L^{2}, \qquad \wt{G}_{2}(w) \in L^{p'}_{T}L^{q'}.
 \]
such that
 \[
  \n \wt{F}(w)=\wt{G}_{1}(w)+\wt{G}_{2}(w).
 \]
Moreover it follows that 
\begin{align*}
&\|\wt{G}_{1}(w)\|_{L^{\infty}_{T}L^{2}}+\|\wt{G}_{2}(w)\|_{L^{p'}_{T}L^{q'}} \\
&\leq C(1+\|\n w\|_{L^{\infty}_{T}L^{2}}) \\
 &\quad +C(1+\|\n w\|_{L^{\infty}_{T}L^{2}})(\|w\|_{L^{\infty}_{T}H^{1}}+\|w\|_{X_{T}}^{\max(1,2\a_{1}-2)}),
\end{align*}
where $C$ is a positive constant depending on $T$. 
\end{lem}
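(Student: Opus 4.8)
The plan is to differentiate $\wt F(w)=-f(|\f+w|^2)(\f+w)$ directly and then separate the result by a smooth cut-off in $|w|$, mirroring G\'erard's decomposition of the nonlinearity used by Gallo. Writing $u=\f+w$ and $\p=|u|^2$, the chain and product rules give the pointwise identity
\[
\n\wt F(w)=-f(\p)\,\n u-2f'(\p)\,\re(\bar u\,\n u)\,u ,
\]
and since $\n u=\n\f+\n w$ each summand is a nonlinear coefficient (built from $f(\p)$, $f'(\p)$ and powers of $|u|$) multiplied by a single gradient factor lying in $L^2$. The whole difficulty is that $f(\p)$ and $\p f'(\p)$ are unbounded as $|u|\to\infty$, so I would split space into a region where $|u|$ is controlled and a region where it is large.

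Concretely, I would set $M_0:=\|\f\|_{L^\infty}$ (finite by $(\textbf{H}'_{\f})$), fix $\c\in C^\infty(\R_+)$ with $0\le\c\le1$, $\c(s)=1$ for $s\le M_0+1$ and $\c(s)=0$ for $s\ge M_0+2$, and define
\[
\wt G_1(w):=\c(|w|)\,\n\wt F(w),\qquad \wt G_2(w):=\bigl(1-\c(|w|)\bigr)\,\n\wt F(w),
\]
so that $\n\wt F(w)=\wt G_1(w)+\wt G_2(w)$ trivially. On $\operatorname{supp}\c(|w|)$ one has $|u|\le 2M_0+2$, so $\p$ ranges in a fixed compact set; since $f\in C^2$, both $f(\p)$ and $f'(\p)$ are bounded there and $|\re(\bar u\,\n u)\,u|\le\p|\n u|$ with $\p$ bounded, whence $|\wt G_1(w)|\le C|\n u|\le C(|\n\f|+|\n w|)$. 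As $\n\f\in L^2$ by $(\textbf{H}'_{\f})$ and $\n w\in L^\infty_TL^2$, this gives $\wt G_1(w)\in L^\infty_TL^2$ with norm $\le C(1+\|\n w\|_{L^\infty_TL^2})$, precisely the first term of the claimed bound.

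For $\wt G_2(w)$ I would work on $\operatorname{supp}(1-\c(|w|))$, where $|w|\ge M_0+1$, hence $|u|\ge|w|-M_0\ge1$ and $\p\ge1$. Invoking $(\textbf{H}_{\a_{1}}')$, integrating $|f'|\le C_0r^{\a_1-2}$ from $\p_0$ and using $f(\p_0)=0$ yields $|f(\p)|\lesssim\p^{\a_1-1}$ for $\a_1>1$ and $|f(\p)|\lesssim\log\p\lesssim\p^{1/2}$ for $\a_1=1$; in all cases $|f(\p)|+\p|f'(\p)|\lesssim\p^{\max(1/2,\a_1-1)}=|u|^{\max(1,2\a_1-2)}$ on $\{\p\ge1\}$. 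Since $|u|\le M_0+|w|\lesssim|w|$ on this set, I obtain the pointwise bound
\[
|\wt G_2(w)|\lesssim|w|^{\max(1,2\a_1-2)}\bigl(|\n\f|+|\n w|\bigr).
\]
To place the right-hand side in $L^{p'}_TL^{q'}$ I would apply H\"older in space, keeping the single gradient factor ($\n\f$ in the fixed $L^2$, which contributes a fixed constant, or $\n w$ in $L^\infty_TL^2$, which contributes $\|\n w\|_{L^\infty_TL^2}$) and estimating the power of $|w|$ in the complementary Lebesgue exponent by the Sobolev embeddings $H^1\hookrightarrow L^r$ and $W^{1,q}\hookrightarrow L^r$; H\"older in time against the $L^\infty_TH^1$ and $L^p_TW^{1,q}$ parts of $\|w\|_{X_T}$ then recovers the exponent $p'$. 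In the regime $\a_1\le 3/2$ the exponent equals $1$ and the power of $w$ is controlled by $\|w\|_{L^\infty_TH^1}$, while for $\a_1>3/2$ it equals $2\a_1-2$ and is controlled by $\|w\|_{X_T}^{2\a_1-2}$; together with the prefactor $1+\|\n w\|_{L^\infty_TL^2}$ this reproduces the two product terms in the statement.

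The main obstacle is this last exponent bookkeeping. For each $n\in\{2,3,4\}$ with the corresponding $(p,q)$ one must verify that the spatial exponent demanded of $|w|^{\max(1,2\a_1-2)}$ by H\"older (against a gradient factor in $L^2$ or $L^q$) stays within the range of a usable Sobolev embedding, and that the accompanying time exponent closes up to exactly $p'$. The supplementary restriction $\a_1<\a_1^*$ ($\a_1^*=3$ for $n=3$, $\a_1^*=2$ for $n=4$) is exactly what keeps the required power of $w$ inside the admissible range $r\le 2n/(n-2)$ of $H^1\hookrightarrow L^r$ (respectively within the reach of $W^{1,q}$ combined with interpolation); once these exponents are checked admissible, H\"older and Sobolev finish the estimate.
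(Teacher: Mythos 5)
Your proof is correct, but it follows a genuinely different route from the paper's. The paper in fact gives no self-contained proof of this lemma: it appeals to the corresponding lemma in Gallo \cite{bib007}, whose method is the one displayed in the appendix for Lemmas \ref{lem01} and \ref{lem03} --- an algebraic, Taylor-type decomposition in which the nonlinear coefficients are frozen at $|\f|^{2}$, so that $\wt{G}_{1}(w)$ collects the terms with bounded coefficients $f(|\f|^{2})$, $f'(|\f|^{2})$ acting at most linearly on $(w,\n w)$, while $\wt{G}_{2}(w)$ is the mean-value remainder, superlinear in $w$, estimated by the same Sobolev-embedding and interpolation bookkeeping between $L^{\infty}_{T}H^{1}$ and $L^{p}_{T}W^{1,q}$ that the appendix carries out for Lemma \ref{lem01} (choice of the pair $(s,r)$, condition (\ref{ic3})). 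You replace that algebraic splitting by a cutoff in the size of $|w|$: where $|w|\leq \|\f\|_{L^{\infty}}+2$ the quantity $|\f+w|^{2}$ ranges in a compact set and the estimate is linear in $|\n\f|+|\n w|$, while on the complementary region $|\f+w|^{2}\geq 1$, so $(\textbf{H}_{\a_{1}}')$ applies in precisely its stated range $r\geq 1$; your remaining exponent bookkeeping is the same interpolation scheme as the paper's, and your observation that the supplementary condition $\a_{1}<\a_{1}^{*}$ is exactly what closes the exponents is right (for $n=3$ one needs $\max(1,2\a_{1}-2)\leq 4$, for $n=4$ one needs $\max(1,2\a_{1}-2)\leq 2$). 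What your version buys is a more elementary argument and a cleanly factored bound; what it loses is compatibility with the rest of the paper. Lemma \ref{lem04} takes Lipschitz differences of the decomposition produced here, and those difference bounds are what Section 5 uses in the estimates of $K_{2}$ and $L_{2}$. Gallo's frozen-coefficient decomposition is stable under differencing, whereas yours generates cross terms of the form $[\c(|w_{1}|)-\c(|w_{2}|)]\,\n\wt{F}(w_{2})$, a product of three $H^{1}$-type factors that cannot be placed in the $L^{\infty}_{T}L^{2}$ slot of Lemma \ref{lem04}; so your argument does prove Lemma \ref{lem02} as stated, but adopting your decomposition throughout would force a reworking of Lemma \ref{lem04} and of the estimates that depend on it.
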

\begin{lem}\label{lem03}
Let $T>0$. For any $w_{1}$, $w_{2} \in X_{T}$, decomposing $f(|\f+w|^{2})(\f+w)$ as Lemma \ref{lem01}, it follows that
\begin{align*}
 &\|\wt{F}_{1}(w_{1})-\wt{F}_{1}(w_{2})\|_{L^{\infty}_{T}L^{2}}+\|\wt{F}_{2}(w_{1})-\wt{F}_{2}(w_{2})\|_{L^{\infty}_{T}L^{q'}} \\
  & \leq C\|w_{1}-w_{2}\|_{L^{\infty}_{T}H^{1}}+C\|w_{1}-w_{2}\|_{L^{\infty}_{T}H^{1}} \\
  &\quad \times ((\|w_{1}\|_{L^{\infty}_{T}H^{1}}+\|w_{2}\|_{L^{\infty}_{T}H^{1}})+(\|w_{1}\|_{L^{\infty}_{T}H^{1}}+\|w_{2}\|_{L^{\infty}_{T}H^{1}})^{\max(1,2\a_{1}-2)}),
 \end{align*}
where $C$ is a positive constant depending on $T$.
\end{lem}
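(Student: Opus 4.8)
The plan is to read Lemma \ref{lem03} as the difference (Lipschitz) version of the norm bound in Lemma \ref{lem01}: the powers $\max(1,2\a_1-2)$ are exactly those of Lemma \ref{lem01} lowered by one, reflecting that forming a difference costs one derivative of the nonlinearity. First I would freeze, for both $w_1$ and $w_2$, the \emph{same} decomposition $\wt{F}(w)=\wt{F}_1(w)+\wt{F}_2(w)$ built in the proof of Lemma \ref{lem01} (same amplitude cutoff, same splitting into a low-amplitude $L^2$ part and a high-amplitude $L^{q'}$ part), so that $\wt{F}_i(w_1)-\wt{F}_i(w_2)$ is literally the difference of matching pieces.

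To extract the factor $w_1-w_2$, I would integrate along the segment $w_\theta:=(1-\theta)w_2+\theta w_1$:
\[
\wt{F}_i(w_1)-\wt{F}_i(w_2)=\int_0^1 \frac{d}{d\theta}\,\wt{F}_i(w_\theta)\,d\theta .
\]
Writing $u_\theta:=\f+w_\theta$ and $h:=w_1-w_2$, the chain rule gives, for the undecomposed nonlinearity,
\[
\frac{d}{d\theta}\bigl[f(|u_\theta|^2)u_\theta\bigr]=2f'(|u_\theta|^2)\,\re(\bar u_\theta h)\,u_\theta+f(|u_\theta|^2)\,h,
\]
and the same computation applied to each piece (plus a term where the cutoff is differentiated) exhibits $h$ as an explicit factor in every resulting term. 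Taking norms will turn this factor into $\|w_1-w_2\|_{L^\infty_T H^1}$.

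It then remains to estimate the $u_\theta$-dependent weights. For the $L^2$ part $\wt{F}_1$, the amplitude is bounded on the support of the cutoff, so $f$ and $f'$ are bounded there by $C^2$-continuity; combined with $\f\in C_b^2\subset L^\infty$ and $|\f|^2-\p_0\in L^2$, H\"older yields both the linear term $C\|w_1-w_2\|_{L^\infty_T H^1}$ and the first-power term $C\|w_1-w_2\|_{L^\infty_T H^1}(\|w_1\|_{L^\infty_T H^1}+\|w_2\|_{L^\infty_T H^1})$ (using $\|w_\theta\|_{H^1}\le\|w_1\|_{H^1}+\|w_2\|_{H^1}$). For the $L^{q'}$ part $\wt{F}_2$, I would invoke $(\textbf{H}_{\a_1}')$ in the form $|f(|u_\theta|^2)|\lesssim|u_\theta|^{2\a_1-2}$ and $|f'(|u_\theta|^2)|\,|u_\theta|^2\lesssim|u_\theta|^{2\a_1-2}$ on the high-amplitude region, so each term is bounded by $|h|$ times a weight of order $(1+|w_\theta|)^{2\a_1-2}$; H\"older together with $H^1\hookrightarrow L^r$ (whose exponents stay admissible in dimension $n$ precisely because $\a_1<\a_1^*$) produces the factor $(\|w_1\|_{L^\infty_T H^1}+\|w_2\|_{L^\infty_T H^1})^{\max(1,2\a_1-2)}$, the $\max$ separating the regimes $2\a_1-2\le1$ and $2\a_1-2>1$.

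The main obstacle is the high-amplitude $L^{q'}$ estimate of $\wt{F}_2$: one must count exactly the powers of $|w_\theta|$ freed by differentiating the growing nonlinearity and then verify that the resulting Lebesgue exponents are reachable from $H^1$ by Gagliardo--Nirenberg/Sobolev embedding in each of $n=2,3,4$. This is the step where $\a_1<\a_1^*$ is indispensable and where the $\max(1,2\a_1-2)$ split originates. The terms coming from differentiating the cutoff, being supported on a fixed annulus in amplitude, are dominated by the same bounds and cause no extra difficulty.
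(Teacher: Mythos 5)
Your overall strategy (write the difference as an integral along the segment $w_\theta=(1-\theta)w_2+\theta w_1$, then apply H\"older and Sobolev embedding) is the same mechanism that underlies the paper's proof, where the resulting pointwise bounds are simply quoted from Gallo. But there are two concrete problems. First, the decomposition of Lemma \ref{lem01} is not an amplitude cutoff: it is the linearization of $\wt{F}$ around $\f$, namely $\wt{F}_1(w)=-f(|\f|^{2})(\f+w)-2\re[\bar{\f}w]f'(|\f|^{2})\f$ and $\wt{F}_2(w)=\wt{F}(w)-\wt{F}_1(w)$, see (\ref{ap:1}) in the appendix. Since the lemma asserts an estimate for precisely that decomposition (``decomposing $f(|\f+w|^{2})(\f+w)$ as Lemma \ref{lem01}''), an argument built on a hypothesized low/high-amplitude splitting does not prove the stated result, even if it could be made to work for your own splitting.

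Second, and more seriously, your pointwise bound for the $L^{q'}$ piece is too weak for the H\"older/Sobolev step to go through. You bound the difference of the $L^{q'}$ parts by $|w_1-w_2|$ times a weight of order $(1+|w_\theta|)^{2\a_1-2}$. The constant part of this weight leaves you, after H\"older, with the term $\|w_1-w_2\|_{L^{q'}}$, and $H^{1}(\R^{n})\not\hookrightarrow L^{q'}$ because $q'<2$ in every case considered here ($q'=6/5$ for $n=2,3$ and $q'=4/3$ for $n=4$): on all of $\R^{n}$ there is no embedding of $H^{1}$ into Lebesgue spaces of exponent below $2$. What makes the paper's estimate work is that $\wt{F}_2$ is a second-order Taylor remainder in $w$, so its difference carries an extra vanishing factor: $|\wt{F}_2(w_1)-\wt{F}_2(w_2)|\leq C|w_1-w_2|(|w_1|+|w_2|)(1+|w_1|+|w_2|)^{\max(0,2\a_1-3)}$. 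With that factor every H\"older exponent can be taken at least $2$, e.g. $\|w_1-w_2\|_{L^{2q'}}(\|w_1\|_{L^{2q'}}+\|w_2\|_{L^{2q'}})$ for the quadratic term, and a pair $(q_1,q_2)$ with $1/q'=1/q_1+1/q_2$ and $H^{1}\hookrightarrow L^{q_1}$, $H^{1}\hookrightarrow L^{q_2\max(1,2\a_1-2)}$ for the higher-power term; only then is everything controlled by $L^{\infty}_{T}H^{1}$ norms. In your framework one could try to repair this by noting that on the high-amplitude set $|w_\theta|\geq R-\|\f\|_{L^{\infty}}\gtrsim 1$, so the constant in the weight can be absorbed into a factor $|w_1|+|w_2|$; but you never invoke this, and as written the assertion that H\"older together with $H^{1}\hookrightarrow L^{r}$ ``produces the factor $(\|w_1\|_{L^{\infty}_{T}H^{1}}+\|w_2\|_{L^{\infty}_{T}H^{1}})^{\max(1,2\a_1-2)}$'' is exactly the point where the argument breaks down.
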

\begin{lem}\label{lem04}
Let $T>0$. For any $w_{1}$, $w_{2} \in X_{T}$, decomposing $f(|\f+w|^{2})(\f+w)$ as Lemma \ref{lem02}, it follows that
\begin{align*}
 &\|\wt{G}_{1}(w_{1})-\wt{G}_{1}(w_{2})\|_{L^{\infty}_{T}L^{2}}+\|\wt{G}_{2}(w_{1})-\wt{G}_{2}(w_{2})\|_{L^{p'}_{T}L^{q'}} \\
 &\leq C\|\n (w_{1}-w_{2})\|_{L^{\infty}_{T}L^{2}} \\
 &\quad +C(1+\|w_{1}\|_{L^{\infty}_{T}H^{1}}+\|w_{2}\|_{L^{\infty}_{T}H^{1}})^{\max(1,2\a_{1}-2)}\|w_{1}-w_{2}\|_{L^{\infty}_{T}H^{1}} \\
 &\quad +C\|w_{1}-w_{2}\|_{X_{T}}(\|w_{1}\|_{X_{T}}^{\max(1,2\a_{1}-2)}+\|w_{2}\|_{X_{T}}^{\max(1,2\a_{1}-2)}) \\
 &\quad +C\|w_{1}-w_{2}\|_{X_{T}}(1+\|w_{1}\|_{L^{\infty}_{T}H^{1}}+\|w_{2}\|_{L^{\infty}_{T}H^{1}}) \\
 &\quad \times (\|w_{1}\|_{X_{T}}^{\max(0,2\a_{1}-3)}+\|w_{2}\|_{X_{T}}^{\max(0,2\a_{1}-3)}), 
 \end{align*}
where $C$ is a positive constant depending on $T$. 
\end{lem}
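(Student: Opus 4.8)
The plan is to prove this Lipschitz-type bound in parallel with the a priori estimate of Lemma \ref{lem02}, reusing the very same decomposition and then linearizing every term in the difference $w_1 - w_2$. Writing $\Psi_j := \f + w_j$ for $j = 1, 2$, the gradient of the nonlinearity is
\[
\n \wt F(w_j) = -f(|\Psi_j|^2)\,\n\Psi_j - 2\,f'(|\Psi_j|^2)\,\re\bigl(\overline{\Psi_j}\cdot\n\Psi_j\bigr)\,\Psi_j ,
\]
and the splitting $\n\wt F(w_j) = \wt G_1(w_j) + \wt G_2(w_j)$ furnished by Lemma \ref{lem02} sorts the constituent products according to whether they are controlled in $L^\infty_T L^2$ (by the $H^1$ norm of $w_j$ together with the boundedness of $\f, \n\f$ coming from $(\textbf{H}'_{\f})$) or in $L^{p'}_T L^{q'}$ (the higher-power products, which absorb the space-time Strichartz norm $\|\n w_j\|_{L^p_T L^q}$). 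First I would fix this decomposition for both $w_1$ and $w_2$ and treat the two differences $\wt G_1(w_1) - \wt G_1(w_2)$ and $\wt G_2(w_1) - \wt G_2(w_2)$ factor by factor.

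The algebra rests on two elementary identities. For each constituent product I would apply the multilinear telescoping rule, e.g. $A_1 B_1 C_1 - A_2 B_2 C_2 = (A_1 - A_2)B_1 C_1 + A_2(B_1 - B_2)C_1 + A_2 B_2(C_1 - C_2)$, isolating a single factor carrying the difference. The gradient and linear factors difference trivially, since $\Psi_1 - \Psi_2 = w_1 - w_2$ and $\n\Psi_1 - \n\Psi_2 = \n(w_1 - w_2)$. For the scalar coefficients $f^{(k)}(|\Psi_j|^2)$ I would use the integral mean-value formula
\[
f^{(k)}(|\Psi_1|^2) - f^{(k)}(|\Psi_2|^2) = \int_0^1 2\,\re\bigl(\overline{\Psi_\t}\,(w_1 - w_2)\bigr)\,f^{(k+1)}(|\Psi_\t|^2)\,d\t, \qquad \Psi_\t := (1-\t)\Psi_2 + \t\Psi_1 ,
\]
which again extracts the factor $w_1 - w_2$ while raising the order of differentiation by one; this is precisely where the hypothesis $f \in C^2$ is consumed, through the appearance of $f''$.

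Next I would estimate each resulting term by H\"older's inequality together with the Gagliardo--Nirenberg and Sobolev embeddings available for $n = 2, 3, 4$, exactly as in Lemma \ref{lem02}. The pointwise input is the refined growth bound (\ref{ff:01}): applied with $r = |\Psi_\t|^2$ it dominates $|\Psi_\t|\,|f^{(k)}(|\Psi_\t|^2)|$ and $|f(|\Psi_\t|^2)|$ by $C(1 + |\Psi_\t|^{\text{power}})$, and combined with $|\Psi_\t| \le |\f| + |w_1| + |w_2|$ and $\f \in C_b^2$ this turns every factor into powers of $\|w_1\|$ and $\|w_2\|$. The bookkeeping is the substance of the proof: the difference factor $w_1 - w_2$ is placed in $L^\infty_T H^1$ or in the full $X_T$ norm according to whether the term lands in $L^\infty_T L^2$ or in $L^{p'}_T L^{q'}$, while the remaining factors generate the symmetric powers with exponents $\max(1, 2\a_1 - 2)$, coming from the $f$ and $f'$ contributions, and $\max(0, 2\a_1 - 3)$, coming from the genuinely trilinear $f''$ and the highest-power $f'$ contributions. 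Summing the four resulting groups of terms yields the asserted inequality, the purely linear group $C\|\n(w_1 - w_2)\|_{L^\infty_T L^2}$ arising from the products whose scalar coefficient stays bounded near the background.

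The hard part will be the trilinear coefficient $-2 f'(|\Psi|^2)\re(\overline{\Psi}\cdot\n\Psi)\Psi$. Its difference splits into a term with $f''$ (from differencing the $f'$ factor) together with three further terms from differencing the two $\Psi$ factors and the gradient, and each of these must be routed to the correct one of $L^\infty_T L^2$ or $L^{p'}_T L^{q'}$ with the correct power of $\|w_j\|_{X_T}$ versus $\|w_j\|_{L^\infty_T H^1}$; a short power count confirms that both the $f''$ term and the one-factor-differenced $f'$ term carry the exponent $\max(0, 2\a_1 - 3)$, matching the last group of the statement. Checking that the Sobolev exponents closing these bounds are admissible forces the use of the refined estimate (\ref{ff:01}) rather than the bare $(\textbf{H}'_{\a_1})$, and works exactly under the dimensional restrictions $\a_1 < 3$ for $n = 3$ and $\a_1 < 2$ for $n = 4$. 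Distributing the single Strichartz norm $\|\n w_j\|_{L^p_T L^q}$ onto one factor per term, so as to recover the $L^{p'}_T$ time-integrability of $\wt G_2$ through the difference, is the most delicate piece of the accounting.
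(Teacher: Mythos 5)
Your proposal is correct and follows essentially the same route as the paper: the paper gives no separate proof of Lemma \ref{lem04}, asserting only that it follows by applying to $\wt{F}(w)$ the same method as the corresponding lemma for $F(w)$ in Gallo \cite{bib007}, and that method is precisely what you sketch --- keep the decomposition of Lemma \ref{lem02} fixed, telescope the multilinear factors, difference the coefficients $f^{(k)}(|\f+w|^{2})$ via the mean-value formula (which is where $f\in C^{2}$ enters through $f''$), and close with H{\"o}lder, Sobolev/Gagliardo--Nirenberg embeddings and the admissible-pair interpolation that routes each piece into $L^{\infty}_{T}L^{2}$ or $L^{p'}_{T}L^{q'}$, exactly as in the appendix proofs of Lemmas \ref{lem01} and \ref{lem03}. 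One phrasing quibble: the bound (\ref{ff:01}) is itself deduced from $(\textbf{H}'_{\a_{1}})$ alone (Remark 1.2), so invoking it is a convenient repackaging of that hypothesis rather than an input beyond it.
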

\begin{rem}\label{lem:cf}
Let $T>0$. Lemma \ref{lem01} and Sobolev embedding $H^{1} \hookrightarrow L^{q}$, imply that for any $w \in C([0, T], H^{1})$ and $t \in [0, T]$, $F(w(t)) \in H^{-1}$. Furthermore, for any $t_{0} \in [0, T]$, Lemma \ref{lem03} yields   
\begin{align*}
\|F(w(t))-F(w(t_{0}))\|_{H^{-1}} &\leq C \|w(t)-w(t_{0})\|_{H^{1}} \\
&\rightarrow 0 \quad as \quad t \rightarrow t_{0},
\end{align*}
where $C$ is a positive constant depending on $\|w\|_{L^{\infty}_{T}H^{1}}$. To show it, for $w \in C([0, T], H^{1})$, it suffices to put $w_{1}(s)=w(t)$ and $w_{2}(s)=w(t_{0})$ ($0 \leq s \leq T$). Thus we also obtain $F(w) \in C([0, T], H^{-1})$.
\end{rem}

In the proof of the main result, we use the following Lemma: 
\begin{lem} \label{lemf05}
For any $\eta \in L^{2}+L^{q'}$, it follows that
\begin{align}
\|\c(D_{x})\eta\|_{H^{1}} \leq C\|\eta\|_{L^{2}+L^{q'}}. \label{lem05}
\end{align}
Moreover for any $\eta \in {\mathcal S}'(\R^{n})$ with $\n \eta \in L^{2}+L^{q'}$, we obtain
\begin{align}
\|(1-\c(D_{x}))\eta\|_{H^{1}} \leq C\|\n \eta\|_{L^{2}+L^{q'}}. \label{lem06}
\end{align}
\end{lem}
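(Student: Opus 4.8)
The plan is to read both inequalities as statements about the Fourier multipliers $\c(D_{x})$ and $1-\c(D_{x})$, where $\c$ is the smooth, compactly supported low-frequency cut-off of G\'erard (so $\c\equiv1$ near the origin and $\c\equiv0$ for $|\xi|$ large). Since the norm on the right is the sum-space norm $\|\cdot\|_{L^{2}+L^{q'}}$, it suffices to prove each estimate separately under the two hypotheses $\eta\in L^{2}$ and $\eta\in L^{q'}$ (respectively $\n\eta\in L^{2}$ and $\n\eta\in L^{q'}$) and then take the infimum over admissible decompositions. The two genuinely different phenomena at play are a gain of regularity (from $L^{p}$ into $W^{1,p}$), which both multipliers must supply, and a gain of integrability (from exponent $q'<2$ up to exponent $2$), which is the delicate point.

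For (\ref{lem05}) I would use that the symbols $\c(\xi)$ and $\xi_{j}\c(\xi)$ are smooth and compactly supported, hence their inverse Fourier transforms $K$ and $K_{j}$ are Schwartz kernels. For the $L^{2}$ component this is immediate: $\|\c(D_{x})\eta\|_{H^{1}}\leq C\|\eta\|_{L^{2}}$ by Plancherel, because $\langle\xi\rangle\c(\xi)$ is bounded. For the $L^{q'}$ component I would write $\c(D_{x})\eta=K*\eta$ and $\n\c(D_{x})\eta=K_{j}*\eta$ and invoke Young's convolution inequality $\|K*\eta\|_{L^{2}}\leq\|K\|_{L^{r}}\|\eta\|_{L^{q'}}$ with $1/r=3/2-1/q'$; since $q'\leq2$ one has $r\geq1$, and $K$ being Schwartz gives $\|K\|_{L^{r}}<\infty$. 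Summing the two contributions and optimizing over the decomposition of $\eta$ yields (\ref{lem05}).

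For (\ref{lem06}) the idea is to trade the gradient appearing on the right for the derivative that $1-\c$ can absorb. Since $1-\c$ vanishes in a neighbourhood of the origin, I would write $(1-\c(D_{x}))\eta=\sum_{k}a_{k}(D_{x})\pa_{k}\eta$, where $a_{k}(\xi)=-i\xi_{k}(1-\c(\xi))/|\xi|^{2}$ is a smooth symbol of order $-1$ (no singularity at $\xi=0$, and $O(|\xi|^{-1})$ at infinity). For the $L^{2}$ part of $\n\eta$ the estimate is again Plancherel, because $\langle\xi\rangle a_{k}(\xi)$ is bounded. For the $L^{q'}$ part the order $-1$ smoothing is exactly what supplies the missing integrability: factoring $a_{k}(D_{x})=|D|^{-1}b_{k}(D_{x})$ with $b_{k}$ a bounded Mikhlin multiplier, the Hardy--Littlewood--Sobolev / Sobolev embedding gives $\||D|^{-1}b_{k}(D_{x})h\|_{L^{2}}\leq C\|h\|_{L^{q'}}$ under the exponent relation $1/q'-1/n=1/2$. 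Taking the infimum over decompositions of $\n\eta$ then gives (\ref{lem06}).

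The step I expect to be the main obstacle is the gain of integrability in the high-frequency estimate (\ref{lem06}): unlike the low-frequency operator $\c(D_{x})$, for which Young's inequality with a Schwartz kernel moves $L^{q'}$ into $L^{2}$ for free, the operator $1-\c(D_{x})$ provides no such gain on its own, so the passage from $q'<2$ up to $2$ must be paid for by the single derivative available in $\n\eta$ together with the order $-1$ smoothing of $a_{k}$. Consequently the careful point is to keep track of the derivative of the output, so that the full $H^{1}$ norm -- and not merely the homogeneous $\dot H^{1}$ seminorm -- is controlled; this is where the precise choice of the admissible exponent $q$ (equivalently the Sobolev relation above) is used, and it is the only place where the restriction on $q'$ genuinely enters.
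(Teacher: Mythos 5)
Your treatment of (\ref{lem05}) is correct: writing $\chi(D_x)\eta_2=K*\eta_2$, $\partial_j\chi(D_x)\eta_2=K_j*\eta_2$ with Schwartz kernels and applying Young's inequality is a standard Bernstein-type alternative to the paper's argument, which instead notes $\|\chi(D_x)\eta\|_{H^1}=\|(1+|\xi|^2)\chi(D_x)\eta\|_{H^{-1}}$, uses the embeddings $L^2\hookrightarrow H^{-1}$ and $L^{q'}\hookrightarrow H^{-1}$ (the latter dual to $H^1\hookrightarrow L^q$), and invokes the Fourier multiplier theorem. Either route works, and yours is arguably more self-contained.

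The second half, however, has a genuine gap, and in fact the inequality you are trying to prove --- the printed form of (\ref{lem06}), with the full $H^1$ norm on the left --- is false. Your HLS factorization only bounds $\|a_k(D_x)\zeta_2^k\|_{L^2}$, where $\zeta_2^k$ is the $L^{q'}$-component of $\partial_k\eta$; the step you yourself flag as ``the careful point,'' namely bounding $\|\nabla a_k(D_x)\zeta_2^k\|_{L^2}$, cannot be carried out: the corresponding symbol $\xi_j\xi_k(1-\chi(\xi))/|\xi|^2$ is bounded but has no decay at infinity, and no such multiplier gains integrability from $L^{q'}$ to $L^2$ when $q'<2$. Concretely, take $\eta_R(x)=e^{iRx_1}\psi(Rx)$ with $\psi$ Schwartz and $\widehat{\psi}$ supported in a small ball; for large $R$ one has $(1-\chi(D_x))\eta_R=\eta_R$, $\|\eta_R\|_{H^1}\sim R^{1-n/2}$, while $\|\nabla\eta_R\|_{L^2+L^{q'}}\leq\|\nabla\eta_R\|_{L^{q'}}\leq CR^{1-n/q'}$, so the ratio of the two sides blows up like $R^{n(1/q'-1/2)}$. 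This is precisely why the paper's own proof concludes with the sum-space bound $\|(1-\chi(D_x))\eta\|_{H^1+W^{1,q'}}\leq C\|\nabla\eta\|_{L^2+L^{q'}}$, and why every later application (the estimates of $K_2$ and $L_2$, and (\ref{ti:1}) in Section 5) pairs $(1-\chi(D_x))\widetilde{F}(w)$ in $H^1+W^{1,q'}$ against $\partial_t w$ in $H^{-1}\cap W^{-1,q}$: the $L^2$-component of $\nabla\eta$ is sent to $H^1$ and the $L^{q'}$-component to $W^{1,q'}$, each by the Mikhlin theorem applied to $(1-\chi(\xi))P_j(\xi)$, with no gain of integrability attempted anywhere. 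The $H^1$ on the left of (\ref{lem06}) should be read as a misprint for $H^1+W^{1,q'}$, and a correct proof must target that weaker norm. A separate, smaller error: your exponent relation $1/q'-1/n=1/2$ holds for $n=3,4$ but fails for $n=2$, where $q'=6/5$ gives $1/q'-1/n=1/3$; the $L^2$-output bound is still true there, but one must instead factor out $|D|^{-\sigma}$ with $\sigma=n(1/q'-1/2)=2/3<1$ and absorb the leftover decay $|\xi|^{\sigma-1}$ into the Mikhlin part.
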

Note that if $X$ and $Y$ are Banach spaces, then $X+Y$ is a Banach space equipped with the norm
\[
\|v\|_{X+Y} := \inf\{\|v_{1}\|_{X}+\|v_{2}\|_{Y}\; : \; v=v_{1}+v_{2}, \; v_{1} \in X, \; v_{2} \in Y\}.
\]

We use the following Theorem to prove Lemma \ref{lemf05}.

\begin{thm}[Fourier multiplier theorem \cite{bib010}] 
Let $1<p<\infty$. For some integer $s>n/2$, suppose that $m(\xi) \in C^{s}(\R^{n}\backslash\{0\}) \cap L^{\infty}(\R^{n})$. Assume also that for all multi-index $\a$ with $|\a| \leq s$, there exists a positive constant $C_{\a}$ such that
\begin{align}
|\pa_{\xi}^{\a}m(\xi)| \leq C_{\a}|\xi|^{-|\a|}. \quad (\xi \in \R^{n} \setminus \{0\}) \label{mik}
\end{align}
Then, there exists a positive constant $C$ depending on $p,\ C_{\a},\ d,\ s$ such that 
\[
\|m(D_{x})f\|_{L^{p}} \leq C\|f\|_{L^{p}}.
\]
\end{thm}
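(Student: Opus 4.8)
The plan is to prove this classical Mikhlin--H{\"o}rmander bound by the Calder{\'o}n--Zygmund route: obtain $L^{2}$ boundedness for free, verify the H{\"o}rmander regularity condition on the convolution kernel, and then invoke the standard singular-integral machinery together with duality. First, $L^{2}$ boundedness is immediate from Plancherel and the hypothesis $m \in L^{\infty}$, since $\|m(D_{x})f\|_{L^{2}}=\|m\,\wh f\|_{L^{2}}\leq \|m\|_{L^{\infty}}\|f\|_{L^{2}}$. The entire difficulty is therefore concentrated in the regularity of the kernel $K:=\mathcal{F}^{-1}m$, which I would handle by a dyadic (Littlewood--Paley) decomposition adapted to the weak hypothesis $s>n/2$.

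Concretely, I would fix a radial bump $\g\in C_{c}^{\infty}(\R^{n})$ supported in $\{1/2\leq|\xi|\leq 2\}$ with $\sum_{j\in\Z}\g(2^{-j}\xi)=1$ for $\xi\neq 0$, set $m_{j}(\xi):=m(\xi)\g(2^{-j}\xi)$ and $K_{j}:=\mathcal{F}^{-1}m_{j}$, so that $m=\sum_{j}m_{j}$ and $K=\sum_{j}K_{j}$. The key reduction is a rescaling: writing $\wt m_{j}(\xi):=m(2^{j}\xi)\g(\xi)$, the hypothesis \eqref{mik} together with the Leibniz rule shows that all derivatives $\pa_{\xi}^{\a}\wt m_{j}$ with $|\a|\leq s$ are bounded uniformly in $j$ on the fixed annulus $\mathrm{supp}\,\g$, because each factor $2^{j|\a|}$ is cancelled exactly by the decay $(2^{j}|\xi|)^{-|\a|}$ there. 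Hence $\sup_{j}\|\wt m_{j}\|_{H^{s}}\leq C$. Since $K_{j}(x)=2^{jn}\wt K_{j}(2^{j}x)$ with $\wt K_{j}:=\mathcal{F}^{-1}\wt m_{j}$, all estimates reduce to a single uniform bound on $\wt K_{j}$. The crucial point, and the place where $s>n/2$ enters, is the Cauchy--Schwarz estimate
\[
\int_{|z|>R}|\wt K_{j}(z)|\,dz \leq \Big(\int \langle z\rangle^{2s}|\wt K_{j}|^{2}\,dz\Big)^{1/2}\Big(\int_{|z|>R}\langle z\rangle^{-2s}\,dz\Big)^{1/2}\leq C\,R^{\,n/2-s},
\]
where the first factor is $\sim\|\wt m_{j}\|_{H^{s}}\leq C$ by Plancherel and the second is finite precisely because $2s>n$.

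To verify the H{\"o}rmander condition $\sup_{y\neq 0}\int_{|x|\geq 2|y|}|K(x-y)-K(x)|\,dx<\infty$, I would split the sum over $j$ according to whether $2^{j}|y|>1$ or $2^{j}|y|\leq 1$. For the high frequencies $2^{j}|y|>1$ I estimate $|K_{j}(x-y)|$ and $|K_{j}(x)|$ separately; undoing the scaling and applying the tail bound above with $R\sim 2^{j}|y|$ gives a contribution $\lesssim(2^{j}|y|)^{n/2-s}$, summable over dyadic $2^{j}|y|>1$ since $n/2-s<0$. For the low frequencies $2^{j}|y|\leq 1$ I use the mean value theorem, $|K_{j}(x-y)-K_{j}(x)|\leq|y|\sup_{0\leq\t\leq1}|\n K_{j}(x-\t y)|$, together with $\int|\n K_{j}|\,dx=2^{j}\int|\n\wt K_{j}|\,dz\leq C2^{j}$ (the multiplier $\xi\,\wt m_{j}$ again lies in a fixed annulus with uniform $H^{s}$ norm); this yields a contribution $\lesssim 2^{j}|y|$, summable over $2^{j}|y|\leq 1$. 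Summing the two resulting geometric series gives the desired uniform bound.

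Finally, the $L^{2}$ boundedness and the H{\"o}rmander kernel condition let me invoke the Calder{\'o}n--Zygmund theorem to conclude that $m(D_{x})$ is of weak type $(1,1)$; interpolation with the $L^{2}$ bound then yields $\|m(D_{x})f\|_{L^{p}}\leq C\|f\|_{L^{p}}$ for $1<p\leq 2$, and the range $2\leq p<\infty$ follows by duality, since the adjoint of $m(D_{x})$ is $\overline{m}(D_{x})$ and $\overline{m}$ satisfies the same hypotheses \eqref{mik}. I expect the main obstacle to be the kernel-regularity step: under the weak assumption $s>n/2$ one cannot obtain the pointwise gradient bound $|\n K(x)|\lesssim|x|^{-n-1}$, so the naive verification of the H{\"o}rmander condition fails. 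The $L^{2}$-based tail estimate above, exploiting that $\langle x\rangle^{-s}$ is square integrable exactly when $2s>n$, is the device that circumvents this and makes the two frequency-splitting sums converge.
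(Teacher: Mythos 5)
Your proof is correct: the Calder\'on--Zygmund route you follow---$L^{2}$ boundedness from Plancherel, uniform $H^{s}$ bounds on the rescaled dyadic pieces $\widetilde{m}_{j}$, the Cauchy--Schwarz tail estimate exploiting $2s>n$ to verify the H\"ormander kernel condition, then weak $(1,1)$, interpolation, and duality---is the standard and complete argument for the Mikhlin--H\"ormander theorem. Note that the paper itself gives no proof of this statement; it is quoted as a known result from Stein \cite{bib010} and used only as a black box in the proof of Lemma \ref{lemf05}, and your argument is essentially the one found in that cited reference, so there is no genuine difference in approach to report.
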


\begin{proof}[Proof of Lemma \ref{lemf05}]
For any $\eta \in L^{2}+L^{q'}$, There exist $\eta_{1} \in L^{2}$ and $\eta_{2} \in L^{q'}$ such that $\eta=\eta_{1}+\eta_{2}$. $Q(\xi)$ denotes $(1+|\xi|^{2})\c(\xi)$. Also, $Q(\xi)$ satisfies (\ref{mik}) since $\c \in C_{0}^{\infty}(\R^{n})$. Therefore, using Fourier multiplier theorem, we obtain
\begin{align*}
\|\c(D_{x})\eta\|_{H^{1}}&= \|Q(D_{x})\eta\|_{H^{-1}} \\
 &\leq \|Q(D_{x})\eta_{1}\|_{L^{2}}+\|Q(D_{x})\eta_{2}\|_{L^{q'}} \\
 &\leq C\left( \|\eta_{1}\|_{L^{2}}+\|\eta_{2}\|_{L^{q'}}\right).  
\end{align*}
Therefore, we deduce that
\begin{align*}
\|\c(D_{x})\eta\|_{H^{1}} \leq C\|\eta\|_{L^{2}+L^{q'}}.
\end{align*}
Next, for any $\eta \in {\mathcal S}'(\R^{n})$ with $\n \eta \in L^{2}+L^{q'}$, there exist $(\zeta_{1}^{j}(w))_{j=1, \cdots , n} \in L^{2}$ and $(\zeta_{2}^{j}(w))_{j=1, \cdots , n} \in L^{q'}$ such that $\n \eta = \zeta_{1}+\zeta_{2}$. Using $P_{j}(\xi) := -i\xi_{j}/|\xi|^2$ \; ($\xi:=(\xi_{j})_{j=1, \cdots, n} \in \R^{n}$), we have
\begin{align*}
(1-\c(D_{x}))\eta &= (1-\c(D_{x}))\sum_{j=1}^{n}P_{j}(D_{x})\pa_{j}\eta \\
 &= \sum_{j=1}^{n} (1-\c(D_{x}))P_{j}(D_{x})\zeta_{1}^{j} + \sum_{j=1}^{n}(1-\c(D_{x}))P_{j}(D_{x})\zeta_{2}^{j}.
\end{align*}
Fourier multiplier theorem implies
\begin{align*}
&\|(1-\c(D_{x}))\eta\|_{H^{1}+W^{1,q'}} \\
&\leq \sum_{j=1}^{n} \|(1-\c(D_{x}))P_{j}(D_{x})\zeta_{1}^{j}\|_{H^{1}}+\sum_{j=1}^{n}\|(1-\c(D_{x}))P_{j}(D_{x})\zeta_{2}^{j}\|_{W^{1,q'}} \\
&\leq C\|\zeta_{1}\|_{L^{2}}+C\|\zeta_{2}\|_{L^{q'}}.
\end{align*}
Thus we get
\begin{align*}
\|(1-\c(D_{x}))\eta\|_{H^{1}+W^{1,q'}} &\leq C\|\n \eta\|_{L^{2}+L^{q'}}.
\end{align*}
\end{proof}

\section{Regularities of time-derivative term}
In this section, we shall show properties of the time-derivative term $\pa_{t} u$.
\begin{lem}\label{delt:1}
Let $n=2$, $3$, $4$, and let $(p,q):= (6/n , 6)$ for $n=2$, $3$, $(p,q):= (2,4)$ for $n=4$. Let $w$ be a solution of equation (\ref{gp1}) belonging to $C([0,T],H^{1})$ for some $T>0$ with the initial data $w(0)=w_{0} \in H^{1}$. Then for any $0 < \e < T' <T$,
\begin{flalign*}
&\text{(i)} \quad \ds \left\|\frac{w(\cdot+h)-w(\cdot)}{h}-\pa_{t}w(\cdot)\right\|_{C([\e,T'], H^{-1})} \rightarrow 0 \quad as \quad h \rightarrow 0, &\\
\text{and}& &\\
&\text{(ii)} \quad \ds \left\|\frac{w(\cdot+h)-w(\cdot)}{h}-\pa_{t}w(\cdot)\right\|_{L^{p}([\e,T'],W^{-1,q})} \rightarrow 0 \quad as \quad h \rightarrow 0.&
\end{flalign*}
\end{lem}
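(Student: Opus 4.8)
The plan is to reduce everything to a single explicit identity for the difference quotient, obtained by substituting the integral equation \eqref{eq:01} into both $w(t+h)$ and $w(t)$ and using the group law $e^{i(t+h-t')\D}=e^{ih\D}e^{i(t-t')\D}$. The homogeneous term $e^{ih\D}e^{it\D}w_{0}$ then recombines with the bulk of the Duhamel integral over $[0,t]$ to reproduce $e^{ih\D}w(t)$, leaving only the short integral over $[t,t+h]$. Using $i\pa_{t}w=-\D w+F(w)$ from Theorem \ref{thm:2}, this gives
\begin{align*}
\frac{w(t+h)-w(t)}{h}-\pa_{t}w(t)=A_{h}(t)-iB_{h}(t),
\end{align*}
\begin{align*}
A_{h}(t):=\frac{e^{ih\D}-1}{h}w(t)-i\D w(t),\qquad B_{h}(t):=\frac{1}{h}e^{ih\D}\int_{t}^{t+h}e^{i(t-t')\D}F(w(t'))\,dt'-F(w(t)).
\end{align*}
For $h<0$ the same identity holds with the oriented integral, and the hypotheses $\e>0$, $T'<T$ guarantee $t\pm|h|\in[0,T]$ once $|h|<\min(\e,T-T')$, so both one-sided limits are legitimate. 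Writing $\tfrac{d}{d\ta}e^{i\ta\D}=i\D e^{i\ta\D}$ and substituting $t'=t+s$ yields the working forms $A_{h}(t)=\tfrac1h\int_{0}^{h}(e^{i\ta\D}-1)(i\D w(t))\,d\ta$ and $B_{h}(t)=\tfrac1h\int_{0}^{h}\big[e^{i(h-s)\D}F(w(t+s))-F(w(t))\big]\,ds$.

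For part (i) I would work entirely in $H^{-1}$, where $e^{i\ta\D}$ is a unitary, strongly continuous group. The working form of $A_h$ gives $\|A_{h}(t)\|_{H^{-1}}\le\sup_{0\le\ta\le h}\|(e^{i\ta\D}-1)(i\D w(t))\|_{H^{-1}}$. Since $w\in C([0,T],H^{1})$ and $\D\colon H^{1}\to H^{-1}$ is bounded, $\{i\D w(t):t\in[\e,T']\}$ is compact in $H^{-1}$; for a uniformly bounded strongly continuous family strong continuity is uniform on compact sets, whence $\sup_{t\in[\e,T']}\|A_{h}(t)\|_{H^{-1}}\to0$. For $B_h$ I would split $e^{i(h-s)\D}F(w(t+s))-F(w(t))=e^{i(h-s)\D}[F(w(t+s))-F(w(t))]+[e^{i(h-s)\D}-1]F(w(t))$; the first term is handled by the uniform continuity of $t\mapsto F(w(t))\in H^{-1}$ (Remark \ref{lem:cf}) and unitarity, the second by compactness of $\{F(w(t)):t\in[\e,T']\}$ and the same uniform strong continuity. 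This gives $\sup_{t\in[\e,T']}\|B_{h}(t)\|_{H^{-1}}\to0$, hence (i).

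Part (ii) is the crux, and the \emph{main obstacle} is that $e^{i\ta\D}$ is no longer bounded on $L^{q}$ for $q>2$, so the pointwise-in-time unitary estimates of (i) fail and one must pass through spacetime Strichartz estimates. First I would check the target is meaningful: by Theorem \ref{thm:1} the solution lies in $X_{T}\subset L^{p}_{T}W^{1,q}$, so $\D w\in L^{p}_{T}W^{-1,q}$, and by Lemma \ref{lem01} (with $L^{2}\hookrightarrow W^{-1,q}$ and $\wt{F}_{2}\in L^{p}_{T}L^{2}$) also $F(w)\in L^{p}_{T}W^{-1,q}$, so $\pa_{t}w\in L^{p}_{T}W^{-1,q}$. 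For $A_{h}$, note $A_{h}=m_{h}(D_{x})w$ with $m_{h}(\xi)=\frac{e^{-ih|\xi|^{2}}-1}{h}+i|\xi|^{2}$, where $|m_{h}(\xi)|\le\min(2|\xi|^{2},\tfrac{h}{2}|\xi|^{4})$; since $m_{h}(D_{x})$ commutes with the propagator, I would insert \eqref{eq:01}, commute $(1-\D)^{-1/2}m_{h}(D_{x})$ through the homogeneous and Duhamel terms, and apply the Strichartz estimates with the admissible pair $(p,q)$ to bound $\|A_{h}\|_{L^{p}([\e,T'],W^{-1,q})}=\|(1-\D)^{-1/2}A_{h}\|_{L^{p}L^{q}}$ by the data and source norms. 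The pointwise decay $m_{h}(\xi)\to0$ together with the domination $|m_{h}(\xi)|\le2|\xi|^{2}$ then forces convergence to $0$ by dominated convergence, after the source $F(w)$ is split via Lemma \ref{lem01} and Lemma \ref{lemf05} into its $L^{2}$- and $L^{q'}$-based pieces. For $B_{h}$, the sliding average $\tfrac1h\int_{t}^{t+h}$ behaves as an approximate identity in time; I would recast the propagator-weighted average as an inhomogeneous Strichartz expression and, using the same decomposition of $F(w)$ and the frequency-cutoff bounds of Lemma \ref{lemf05}, show its $L^{p}([\e,T'],W^{-1,q})$ norm tends to $0$. The delicate point throughout (ii) is exactly this replacement of unitary bounds by Strichartz estimates, which compels one to track carefully which components of $F(w)$ lie in $L^{2}$-based versus $L^{q'}$-based spaces (the role of Lemmas \ref{lem01} and \ref{lemf05}) and to justify $h\to0$ by a dominated-convergence argument rather than by a uniform operator bound.
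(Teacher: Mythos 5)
Your part (i) is correct, and it is a legitimate alternative to the paper's argument: the paper instead shows $\pa_{t}w=i(\D w-F(w))\in C([0,T],H^{-1})$ (via Remark \ref{lem:cf}), writes $w(t+h)-w(t)=\int_{t}^{t+h}\pa_{t}w(s)\,ds$, and concludes from uniform continuity of $\pa_{t}w$ in $H^{-1}$; your route through the Duhamel identity, unitarity and strong continuity of $e^{i\tau\D}$ on $H^{-1}$, and compactness of the orbits $\{\D w(t)\}$, $\{F(w(t))\}$ is sound as well.

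Part (ii), however, has a genuine gap, and it sits exactly where you flagged the difficulty. Your plan rests on ``dominated convergence'' for the multipliers $m_{h}(D_{x})$ (and for the averaged group hidden in $B_{h}$) acting on the pieces of $F(w)$. Dominated convergence on the Fourier side controls only $L^{2}$-based norms, via Plancherel; it says nothing about $L^{q'}$- or $L^{q}$-based norms. Since $m_{h}(D_{x})$ loses \emph{two} derivatives, the zero-regularity decomposition of Lemma \ref{lem01} is insufficient for the Duhamel source (one needs $F(w)$ with one positive derivative), and once you invoke the G\'erard-type decomposition behind Lemma \ref{lemf05} you unavoidably produce a $W^{1,q'}$ component (coming from $\wt{G}_{2}\in L^{p'}_{T}L^{q'}$ of Lemma \ref{lem02}); for that piece you would need \emph{strong} convergence of $m_{h}(D_{x})$-type operators on $L^{q'}$, which requires uniform-in-$h$ Mikhlin bounds, a density argument, and continuity of $e^{i\tau\D}$ on Schwartz functions in $L^{q'}$ (a delicate fact precisely because the group is unbounded on $L^{q'}$) --- none of which your dominated-convergence step supplies. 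The $B_{h}$ term is worse: estimating $[e^{i(h-s)\D}-1]F(w(t))$ pointwise in time in $W^{-1,q}$ is illegitimate (the group is unbounded there), and the sliding integral $\frac{1}{h}\int_{t}^{t+h}e^{i(t+h-t')\D}F(w(t'))\,dt'$ is not of the retarded form $\int_{0}^{t}$ covered by the stated Strichartz lemma, so ``recast as an inhomogeneous Strichartz expression'' is an assertion, not a proof. All of this machinery is avoidable: the paper never expands the difference quotient through Duhamel. It first shows $\pa_{t}w=i(\D w-F(w))\in L^{p}([0,T],W^{-1,q})$ --- using $w\in X_{T}$ (Theorem \ref{thm:1}), $\wt{F}_{2}(w)\in L^{p}_{T}L^{2}$ (Lemma \ref{lem01}), and the embedding $L^{2}\hookrightarrow W^{-1,q}$ --- and then writes the error as the average $\frac{1}{h}\int_{0}^{h}\bigl(\pa_{t}w(\cdot+s)-\pa_{t}w(\cdot)\bigr)ds$, which tends to $0$ in $L^{p}([\e,T'],W^{-1,q})$ by continuity of translation in $L^{p}$ with $p<\infty$. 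In that argument no propagator ever acts on an $L^{q}$-based space, which is exactly what makes (ii) as easy as (i).
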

\begin{proof}
Note that equation (\ref{gp1}) implies 
\begin{align}
\pa_{t}w &= i(\D w-F(w)) \label{td1}.
\end{align}
We show (i) and (ii) using (\ref{td1}). \\
\underline{Proof of (i).} \quad Note that from Theorem \ref{thm:2}, for any $0 \leq t \leq T$, $\pa_{t}w(t) \in H^{-1}$ exists in strong sense. Hence, it suffices to show continuity of $\pa_{t}w(t)$ on $[0, T]$. Clearly,  
\begin{align}
\|\D w\|_{H^{-1}} \leq \|\n w\|_{L^{2}}, \label{td2}
\end{align}
which yields $\D w\in C([0,T], H^{-1})$. Using (\ref{td1}), (\ref{td2}) and Remark \ref{lem:cf}, we obtain
\begin{align*}
\pa_{t}w \in C([0,T], H^{-1}). 
\end{align*}
Hence, it follows that for all $t_{0}$, $t \in [0,T]$,
\begin{align}
w(t)-w(t_{0})=\int_{t_{0}}^{t}\pa_{t}w(s)ds\quad in \; H^{-1}. \label{bi:1}
\end{align}
We take $0 < \e < T' <T$. For all $t_{0} \in [\e, T']$ and sufficiently small $h \in \R$,
\begin{align*}
\left\|\frac{w(t_{0}+h)-w(t_{0})}{h}-\pa_{t}w(t_{0})\right\|_{H^{-1}} &\leq \frac{1}{|h|} \left|\int_{t_{0}}^{t_{0}+h}\|\pa_{t}w(s)-\pa_{t}w(t_{0})\|_{H^{-1}}ds\right| \\
&\leq \sup_{|s-t_{0}| \leq |h| }\|\pa_{t}w(s)-\pa_{t}w(t_{0})\|_{H^{-1}}.
\end{align*}
Since $t \mapsto \pa_{t}u(t) \in H^{-1}(\R^{n})$ is uniformly continuous on $[0,T]$, we obtain (i). \\
\underline{Proof of (ii).} \quad Since $W^{1,q'}(\R^{n}) \hookrightarrow L^{2}(\R^{n})$ and $w \in C([0,T], H^{-1})$ and $\f$ satisfies $(\textbf{H}_{\f})$, we clearly get 
\begin{align}
\D w \in L^{p}([0,T], W^{-1,q}) \quad \text{and}\quad \D\f \in L^{p}([0,T], W^{-1,q}). \label{td23}
\end{align} 
Moreover, using Sobolev embedding and duality argument, we conclude $L^{2} \hookrightarrow W^{-1,q}$. Thus Lemma \ref{lem01} yields
\begin{align}
F(w) \in L^{p}([0,T],W^{-1,q}). \label{td26}
\end{align}
Therefore, concatenating (\ref{td1}), (\ref{td23}) and (\ref{td26}), we obtain  
\[
\pa_{t}w \in L^{p}([0,T],W^{-1,q}).
\]
Let $t_{0} \in [0,T]$. By (\ref{bi:1}), for any $t \in [0,T]$, 
\[
w(t)-w(t_{0})=\int_{t_{0}}^{t}\pa_{t}w(s)ds \quad in\ {\mathcal S}'(\R^{n}),
\]
where ${\mathcal S}(\R^{n})$ and ${\mathcal S}'(\R^{n})$ denote Schwartz space on $\R^{n}$ and the space of tempered distributions on $\R^{n}$, respectively. Using H{\"o}lder's inequality, we get 
\begin{align*}
\left\|\int_{t_{0}}^{\cdot}\pa_{t}w(s)ds\right\|_{L^{p}([0,T],W^{-1,q})} &\leq \left\|\int_{t_{0}}^{\cdot}\|\pa_{t}w(s)\|_{W^{-1,q}}ds\right\|_{L^{p}([0,T])} \\
&\leq \left[\int_{0}^{T}(t-t_{0})^{p/p'}\left(\int_{t_{0}}^{t}\|\pa_{t}w(s)\|_{W^{-1,q}}^{p}ds\right) dt\right]^{1/p} \\
&\leq T^{1/p'} \left[\int_{0}^{T}\left(\int_{t_{0}}^{t}\|\pa_{t}w(s)\|_{W^{-1,q}}^{p}ds\right) dt\right]^{1/p} \\
&\leq T^{1/p'} \left( T \times \int_{0}^{T}\|\pa_{t}w(s)\|_{W^{-1,q}}^{p}ds\right)^{1/p} \\
&\leq T\|\pa_{t}w\|_{L^{p}_{T}W^{-1,q}}.
\end{align*}
Therefore, for all $t_{0} \in [0,T]$, 
\begin{align}
w(\cdot)-w(t_{0})=\int_{t_{0}}^{\cdot}\pa_{t}w(s)ds\quad in \quad L^{p}([0,T],W^{-1,q}). \label{td27}
\end{align}
Combining (\ref{td27}) with Strichartz's estimate, in a way similar to the preceding argument, for all $0 < \e < T' < T$, we obtain
\begin{align*}
&\left\|\frac{w(\cdot+h)-w(\cdot)}{h}-\pa_{t}w(\cdot)\right\|_{L^{p}([\e, T'],W^{-1,q})} \\
&\leq \left\|\frac{1}{h}\int_{\cdot}^{\cdot+h}\|\pa_{t}w(s)-\pa_{t}w(\cdot)\|_{W^{-1,q}}ds\right\|_{L^{p}([\e,T'])} \\
&= \left\{\int_{\e}^{T'}\left|\frac{1}{h}\int_{t_{0}}^{t_{0}+h}\|\pa_{t}w(s)-\pa_{t}w(t_{0})\|_{W^{-1,q}}ds\right|^{p}dt_{0}\right\}^{1/p} \\
&\leq h^{1/p'-1}\left\{\int_{\e}^{T'}\left(\int_{t_{0}}^{t_{0}+h}\|\pa_{t}w(s)-\pa_{t}w(t_{0})\|_{W^{-1,q}}^{p}ds\right)dt_{0}\right\}^{1/p} \\
&=h^{1/p'-1}\left\{\int_{0}^{h}\left(\int_{\e}^{T'}\|\pa_{t}w(t_{0}+s)-\pa_{t}w(t_{0})\|_{W^{-1,q}}^{p}dt_{0}\right)ds\right\}^{1/p}  \\
&\leq \sup_{0 \leq s \leq h}\left(\int_{\e}^{T'}\|\pa_{t}w(t_{0}+s)-\pa_{t}w(t_{0})\|_{W^{-1,q}}^{p}dt_{0}\right)^{1/p} \\
&\rightarrow 0\quad as \quad h \rightarrow 0.
\end{align*}
This completes the proof of Lemma \ref{delt:1}.
\end{proof}

\section{The proof of the main result}
Since Schr{\"o}dinger operator $e^{it\D}$ becomes bounded operator from $\f+H^{1}$ to itself (see Lemma 3 in G{\'e}rard \cite{bib001}), we can obtain 
\[
\f =e^{it\D}\f -i\int_{0}^{t}e^{i(t-t')\D}\D \f dt'.
\]
Combining the above equality with (\ref{eq:01}), we get
\begin{align}
\f+w(t) =e^{it\D}(\f+w_{0})-i\int_{0}^{t}e^{i(t-t')\D}\wt{F}(w(t'))dt', \label{ieq2}
\end{align}
where $\wt{F}(w):= -f(|\f+w|^{2})(\f+w)$. From now on, we deduce the proof in a way similar to Ozawa \cite{bib002}. Acting $\n$ on (\ref{ieq2}), we obtain  
\begin{align}
&||\n (\f+w(t))||_{L^{2}}^{2} \nonumber \\
&=||\n e^{i(-t)\D}(\f+w(t))||_{L^{2}}^{2} \nonumber \\
&=||\n (\f+w_{0})||_{L^{2}}^{2} -2\im \left(\n (\f+w_{0}),\int_{0}^{t}e^{i(-t')\D}\n \wt{F}(w(t'))dt'\right)_{L^{2}} \nonumber \\
&\qquad +\left\|\int_{0}^{t}e^{i(-t')\D}\n \wt{F}(w(t'))dt' \right\|^{2}_{L^{2}}. \label{term:01}
\end{align}
The second term on the RHS of (\ref{term:01}) satisfies the following equality: 
\begin{align}
&-2\im\left(\n (\f+w_{0}),\int_{0}^{t}e^{i(-t')\D}\n \wt{F}(u(t'))dt'\right)_{L^{2}} \nonumber \\
&\qquad  = -2\im\int_{0}^{t}\langle e^{it'\D}\n (\f+w_{0}), \overline{\n (\wt{F}(w(t')))}\rangle dt', \label{term02}
\end{align}
where the time integral of the scalar product is understood as the duality coupling on $(L_{T}^{1}L^{2}\cap L_{t}^{p}L^{q})\times (L^{\infty}_{T}L^{2}+L^{p'}_{T}L^{q'})$ with $(p,q)=(6/n,6)$ if $n=2$, $3$, $(p,q)=(2,4)$ if $n=4$.
For the last term on the RHS of (\ref{term:01}), Fubini's theorem implies
\begin{align}
&\left\|\int_{0}^{t}e^{i(-t')\D}\n (\wt{F}(w(t')))dt' \right\|^{2}_{L^{2}} \nonumber \\
&\qquad =2\re\int_{0}^{t}\left\langle \n (\wt{F}(w(t'))),\overline{\int_{0}^{t'}e^{i(t'-t'')\D}\n (\wt{F}(w(t'')))dt''} \right\rangle dt', \label{term:03}
\end{align}
where the time integral of the scalar product is understood as the duality coupling on $(L^{\infty}_{T}L^{2}+L^{p'}_{T}L^{q'})\times (L_{T}^{1}L^{2}\cap L_{T}^{p}L^{q})$. Concatenating (\ref{term:01}) - (\ref{term:03}), we compute
\begin{align*}
&\|\n (\f+w(t))\|_{L^{2}}^{2} \\
&=||\n (\f+w_{0})||_{L^{2}}^{2}-2\im \int_{0}^{t}\langle e^{it'\D}\n (\f+w_{0}), \overline{\n (\wt{F}(w(t')))}\rangle dt' \\
&\qquad +2\re\int_{0}^{t}\left\langle \n (\wt{F}(w(t'))),\overline{\int_{0}^{t'}e^{i(t'-t'')\D}\n (\wt{F}(w(t'')))dt''} \right\rangle dt'  \\
&=||\n (\f+w_{0})||_{L^{2}}^{2}+2\im \int_{0}^{t}\langle \n (\wt{F}(w(t')),\overline{ e^{it'\D}\n (\f+w_{0})}\rangle dt' \\
&\qquad +2\im\int_{0}^{t}\left\langle \n (\wt{F}(w(t'))),\overline{-i\int_{0}^{t'}e^{i(t'-t'')\D}\n (\wt{F}(w(t'')))dt''} \right\rangle dt'  \\
&=||\n (\f+w_{0})||_{L^{2}}^{2}+\lim_{\e \downarrow 0}2\im \int_{0}^{t}\langle (1-\e\D)^{-1}\n (\wt{F}(w(t'))), \overline{\n w(t')}\rangle dt', 
\end{align*}
where the last equality in the above holds by using (\ref{eq:01}). Taking the duality coupling between the equation (\ref{gp1}) and $(1-\e\D)^{-1}\n (\wt{F}(w))$ on $H^{-1}\times H^{1}$ and using $\im\{\langle (1-\e\D)^{-1}\wt{F}(w),\overline{\wt{F}(w)}\rangle \}=0$, we obtain 
\begin{align*}
\im \langle (1-\e\D)^{-1}\n (\wt{F}(w)),\overline{\n w} \rangle &= \im\{-i\langle (1-\e\D)^{-1}\wt{F}(w),\overline{\pa_{t}w}\rangle\}. 
\end{align*}
From these equalities, we can show
\begin{align}
&\|\n (\f+w(t))\|_{L^{2}}^{2} \nonumber \\
&=||\n (\f+w_{0})||_{L^{2}}^{2}-\lim_{\e \downarrow 0}2\re \int_{0}^{t}\langle (1-\e\D)^{-1}F(u(t'), \overline{\pa_{t}w(t')}\rangle dt' \nonumber \\
&=||\n (\f+w_{0})||_{L^{2}}^{2}-2\re \int_{0}^{t}\langle F(w(t'), \overline{\pa_{t}w(t')}\rangle dt'. \label{5eq:01}
\end{align}
Note that in the above time integral of the scalar product in the last line is understood as the duality coupling on $(L^{\infty}_{T}H^{1}+(L^{\infty}_{T}H^{1}+L^{p'}_{T}W^{1,q'})) \times ((L_{T}^{1}H^{-1})\cap(L_{T}^{1}H^{-1} \cap L_{T}^{p}W^{-1,q}))$ by applying the idea used Lemma 3 in G{\'e}rard \cite{bib001}, that is, we decompose $F(w)$ as
\[
F(w)=\c(D_{x})F(w)+\sum_{j=1}^{n}(1-\c(D_{x}))P_{j}(D_{x})\pa_{x_{j}}F(w),
\]
where $\c \in C_{0}^{\infty}(\R^{n})$ is a cutoff function such that $0 \leq \c \leq 1$, $\c(\xi)=1$\ for $|\xi| \leq 1$ and $\c(\xi)=0$\ for $|\xi|\geq 2$, and $P_{j}(\xi)=i\xi_{j}/|\xi|^{2}$. 

We show (\ref{5eq:01}). It follows from Theorem \ref{thm:2}, Lemma \ref{lem01}, \ref{lem02}, \ref{lemf05} and \ref{delt:1} that
\begin{align}
&\left|\int_{0}^{t}\langle \wt{F}(w(t')), \overline{\pa_{t}w(t')}\rangle dt'\right| \nonumber \\
&\leq \int_{0}^{t}|\langle \c(D_{x})\wt{F}(w(t')),\overline{\pa_{t}w(t')}\rangle | dt'+ \int_{0}^{t}|\langle (1-\c(D_{x}))\wt{F}(w(t')),\overline{\pa_{t}w(t')}\rangle | dt' \nonumber \\
&\leq \|\c(D_{x})\wt{F}(w)\|_{L^{\infty}_{T}H^{1}}\|\pa_{t}w\|_{L^{1}_{T}H^{-1}} \nonumber \\
&\qquad +\|(1-\c(D_{x}))\wt{F}(w)\|_{L^{p'}_{T}(H^{1}+W^{1,q'})}\|\pa_{t}w\|_{L^{p}_{T}(H^{-1} \cap W^{-1,q})}. \nonumber \\
&\leq C(\|F_{1}(w)\|_{L^{\infty}_{T}L^{2}}+\|F_{2}(w)\|_{L^{\infty}_{T}L^{q'}})\|\pa_{t}w\|_{L^{\infty}_{T}H^{-1}} \nonumber \\
&\quad +C(\|G_{1}(w)\|_{L^{p'}_{T}L^{2}}+\|G_{2}(w)\|_{L^{p'}_{T}L^{q'}})\|\pa_{t}w\|_{L^{p}_{T}(H^{-1} \cap W^{-1,q})}. \label{ti:1}
\end{align}
Furthermore, by using a similar argument to the above and Lebesgue convergence theorem, we deduce that
\begin{align*}
\lim_{\e \downarrow 0}\int_{0}^{t}\langle (1-\e\D)^{-1}\wt{F}(w(t')), \overline{\pa_{t}w(t')}\rangle dt' = \int_{0}^{t}\langle \wt{F}(w(t')), \overline{\pa_{t}w(t')}\rangle dt',
\end{align*}
which yields (\ref{5eq:01}). 

From (\ref{5eq:01}), formally, we can continue as follows:
\begin{align*}
\|\n (\f+w(t))\|_{L^{2}}^{2} &=||\n (\f+w_{0})||_{L^{2}}^{2}- \int_{0}^{t}\frac{\pa}{\pa t}\left(\int_{\R^{n}}V(|\f+w(t')|^{2})dx\right) dt'\\
&=||\n (\f+w_{0})||_{L^{2}}^{2}- \int_{\R^{n}}V(|\f+w(t)|^{2})dx+\int_{\R^{n}}V(|\f|^{2})dx,
\end{align*}
since a formal argument implies 
\[
\frac{\pa}{\pa t}\left(\int_{\R^{n}}V(|\f+w(t)|^{2})dx\right) = 2\re \langle \wt{F}(w(t)), \overline{\pa_{t}w(t)}\rangle.
\]
Hence, to justify the argument above, we need to show the following lemma. 
\begin{lem}\label{llem:1}
$\ds \int_{\R^{n}}V(|\f+ w(\cdot)|^{2})dx \in W^{1,1}((0,T))$ \\
and
\[
\frac{\pa}{\pa t}\left(\int_{\R^{n}}V(|\f+w(t)|^{2})dx\right) = 2\re \langle \wt{F}(w(t)), \overline{\pa_{t}w(t)}\rangle_{A \times B} \quad \text{in} \quad L^{1}((0,T)),
\]
where $A:=(H^{1}+(H^{1}+W^{1,q'}))$, $B:=(H^{-1}\cap(H^{-1} \cap W^{-1,q}))$.
\end{lem}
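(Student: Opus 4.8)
The plan is to establish the integrated form of the identity on every compact subinterval $[\e,T']\subset(0,T)$ and then pass to the endpoints. Throughout set $g(t):=\int_{\R^{n}}V(|\f+w(t)|^{2})dx$. Since $V'=-f$ and $\f$ is independent of $t$, the chain rule suggests the pointwise-in-$t$ relation $\pa_{t}V(|\f+w|^{2})=2\re(\wt{F}(w)\overline{\pa_{t}w})$, which after integration in $x$ is exactly the claimed derivative. The first point is that the right-hand side $t\mapsto 2\re\langle \wt{F}(w(t)),\overline{\pa_{t}w(t)}\rangle_{A\times B}$ belongs to $L^{1}((0,T))$: this is already contained in estimate (\ref{ti:1}), where $\wt{F}(w)$ is split by the G{\'e}rard cutoff so that $\c(D_{x})\wt{F}(w)\in L^{\infty}_{T}H^{1}$ and $(1-\c(D_{x}))\wt{F}(w)\in L^{p'}_{T}(H^{1}+W^{1,q'})$ by Lemmas \ref{lem01}, \ref{lem02} and \ref{lemf05}, while $\pa_{t}w\in L^{\infty}_{T}H^{-1}\cap L^{p}_{T}W^{-1,q}$ by Theorem \ref{thm:2} and the proof of Lemma \ref{delt:1}.

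Next I would make the differentiation rigorous through an exact difference quotient. For fixed $t$ and small $h$, writing $\gamma_{\theta}:=\f+w(t)+\theta(w(t+h)-w(t))$ and using the fundamental theorem of calculus in the radial variable together with $V'=-f$, one obtains
\begin{align*}
\frac{g(t+h)-g(t)}{h}&=2\re\left\langle \wt{F}_{h}(t),\ \overline{\tfrac{w(t+h)-w(t)}{h}}\right\rangle_{A\times B}, \\
\wt{F}_{h}(t)&:=-\int_{0}^{1}f(|\gamma_{\theta}|^{2})\gamma_{\theta}\,d\theta,
\end{align*}
the pairing again being understood via the decomposition $\wt{F}_{h}=\c(D_{x})\wt{F}_{h}+(1-\c(D_{x}))\wt{F}_{h}$ of Lemma \ref{lemf05}. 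As $h\to0$ we have $\gamma_{\theta}\to\f+w(t)$ in $H^{1}$ uniformly in $\theta\in[0,1]$ because $w\in C([0,T],H^{1})$, so $\wt{F}_{h}(t)\to\wt{F}(w(t))$ in $A$ by the a priori estimates of Lemmas \ref{lem01}, \ref{lem02} and the Lipschitz estimates of Lemmas \ref{lem03}, \ref{lem04} (applied to the $\theta$-integrand) combined with Lemma \ref{lemf05}; simultaneously $\tfrac{w(t+h)-w(t)}{h}\to\pa_{t}w(t)$ in $B$ by Lemma \ref{delt:1}. Hence the difference quotients converge to $2\re\langle \wt{F}(w(t)),\overline{\pa_{t}w(t)}\rangle_{A\times B}$ for each $t\in(0,T)$.

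The main obstacle is to upgrade this pointwise-in-$t$ convergence to convergence in $L^{1}([\e,T'])$, and this is exactly where the two simultaneous modes of convergence in Lemma \ref{delt:1} are needed. Fixing $0<\e<T'<T$, I would split the pairing along the cutoff: pair the low-frequency part $\c(D_{x})\wt{F}_{h}\in H^{1}$ against the factor that converges in $C([\e,T'],H^{-1})$ (Lemma \ref{delt:1}(i)), and the high-frequency part $(1-\c(D_{x}))\wt{F}_{h}\in H^{1}+W^{1,q'}$ against the factor that converges in $L^{p}([\e,T'],W^{-1,q})$ (Lemma \ref{delt:1}(ii)). Applying H{\"o}lder in time exactly as in (\ref{ti:1}), together with uniform-in-$h$ bounds for $\wt{F}_{h}$ in $L^{\infty}_{T}H^{1}+L^{p'}_{T}(H^{1}+W^{1,q'})$ — valid since $\sup_{\theta\in[0,1]}\|w(t)+\theta(w(t+h)-w(t))\|_{H^{1}}$ is bounded uniformly in $t\in[0,T]$ and small $h$ — the difference quotients converge in $L^{1}([\e,T'])$. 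Consequently $g$ is absolutely continuous on $[\e,T']$ with a.e. derivative $2\re\langle \wt{F}(w),\overline{\pa_{t}w}\rangle_{A\times B}$.

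Finally I would remove the cutoff of the interval. The function $g$ is continuous on $[0,T]$ by continuity of $t\mapsto w(t)$ in $H^{1}$ and of $s\mapsto V(|\f+s|^{2})$ as a map into $L^{1}$ (using $(\textbf{H}_{f})$, $(\textbf{H}_{\a_{1}}')$ and $(\textbf{H}'_{\f})$, which give the required integrability and local Lipschitz control of $V(|\cdot|^{2})$), and its derivative lies in $L^{1}((0,T))$ by the first paragraph. Letting $\e\downarrow0$ and $T'\uparrow T$ in $g(T')-g(\e)=\int_{\e}^{T'}2\re\langle \wt{F}(w),\overline{\pa_{t}w}\rangle_{A\times B}\,dt$ then yields $g\in W^{1,1}((0,T))$ with the stated weak derivative, completing the proof.
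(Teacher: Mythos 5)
Your plan follows the paper's architecture closely --- the $\theta$-averaged fundamental theorem of calculus identity, the G\'erard cutoff splitting of the pairing, the two convergence modes of Lemma \ref{delt:1} on $[\e,T']$, and (\ref{ti:1}) for the $L^{1}$ membership --- but it has a genuine gap at the point where $\wt{F}_{h}$ must be compared with $\wt{F}(w)$. Writing the error as $2\re\langle \wt{F}_{h},\overline{\tfrac{w(\cdot+h)-w(\cdot)}{h}-\pa_{t}w}\rangle+2\re\langle \wt{F}_{h}-\wt{F}(w),\overline{\pa_{t}w}\rangle$, your third paragraph controls only the first term (uniform bounds on $\wt{F}_{h}$ against the vanishing factor from Lemma \ref{delt:1}); the second term rests entirely on your pointwise-in-$t$ claim that $\wt{F}_{h}(t)\to\wt{F}(w(t))$ in $A$ ``because $w\in C([0,T],H^{1})$''. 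That claim is not justified by the lemmas you cite, and for the high-frequency component it fails in general: by Lemma \ref{lemf05}, convergence of $(1-\c(D_{x}))(\wt{F}_{h}(t)-\wt{F}(w(t)))$ in $H^{1}+W^{1,q'}$ requires convergence of the gradients in $L^{2}+L^{q'}$, i.e.\ Lemma \ref{lem04}, whose bound involves the Strichartz-type norm $\|w_{1}-w_{2}\|_{X}$, hence $\|w(t+h)-w(t)\|_{W^{1,q}}$ with $q>2$. This quantity is not controlled by the $H^{1}$ norm and does not tend to zero for fixed $t$: the solution is $W^{1,q}$-valued only for a.e.\ $t$, and $t\mapsto w(t)$ has no continuity in $W^{1,q}$. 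Concretely, $\n(\wt{F}_{h}-\wt{F}(w))$ contains terms like $f(|\gamma_{\theta}|^{2})\,\theta\,\n(w(t+h)-w(t))$ with $f$ unbounded when $\a_{1}>1$, so $L^{2}$-smallness of $\n(w(t+h)-w(t))$ is not enough.

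The missing idea is exactly what the paper uses to close this term: do not argue pointwise in $t$, but estimate the error after integration in time (the terms $K_{1}$, $K_{2}$ via (\ref{Furi1})--(\ref{Furi2})), obtaining a bound $C\|w(\cdot+h)-w(\cdot)\|_{X_{[\e,T']}}$ times difference-quotient norms that remain bounded by Lemma \ref{delt:1}, and then invoke the local Lipschitz continuity (\ref{thm2f}) of the flow map from Theorem \ref{thm:1} to conclude $\|w(\cdot+h)-w(\cdot)\|_{X_{[\e,T']}}\leq C\|w(|h|)-w_{0}\|_{H^{1}}\rightarrow 0$ as $h\rightarrow 0$. Well-posedness makes time translation continuous in the Strichartz norm even though $t\mapsto w(t)$ is not $W^{1,q}$-continuous; this is the ingredient your proposal never invokes, and without it the convergence $\wt{F}_{h}\to\wt{F}(w)$, in any sense strong enough to pair against $\pa_{t}w\in L^{p}_{T}(H^{-1}\cap W^{-1,q})$, cannot be established. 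With that step added, and the limit reorganized as an $L^{1}([\e,T'])$ estimate of both error terms as in (\ref{last:1})--(\ref{last:3}), your outline matches the paper's proof; the remaining parts (the $L^{1}((0,T))$ membership via (\ref{ti:1}) and the passage $\e\downarrow 0$, $T'\uparrow T$) are fine.
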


\begin{proof}
Put $I=(0,T)$ for simplicity. Moreover, ${\mathcal D}(I)$ and ${\mathcal D}'(I)$ denote the Fr{\'e}chet space of $C^{\infty}$ functions $I \rightarrow \C$ compactly supported in $I$ and the space of distributions on $I$, respectively. Note that as is in Gallo \cite{bib007}, from $(\textbf{H}_{f})$, the mapping $w \mapsto V(|\f+w|^{2})$ become a bounded operator from $H^{1}(\R^{n})$ to $L^{1}(\R^{n})$. 
Thus, for any $\v \in C^{\infty}_{0}(0,T)$, we have
\begin{align*}
&\left\langle \frac{\pa}{\pa t}\int_{\R^{n}}V(|\f+w|^{2})dx, \v \right\rangle_{{\mathcal D}'(I) \times {\mathcal D}(I)} \\
&= \left\langle -\int_{\R^{n}}V(|\f+w|^{2})dx, \pa_{t}\v \right\rangle_{{\mathcal D}'(I) \times {\mathcal D}(I)} \\
&=-\int_{I}\left(\int_{\R^{n}}V(|\f+w(t)|^{2})dx \right)\pa_{t}\v(t) dt. 
\end{align*}
Take $0 < \e < T' <T$ such that $\text{supp}(\v) \subset [\e, T']$. Using Lebesgue convergence theorem, we compute
\begin{align*}
&-\int_{I}\left(\int_{\R^{n}}V(|\f+w(t)|^{2})dx \right)\pa_{t}\v(t) dt \\
&=\lim_{h \rightarrow 0}\left\{ -\int_{\e}^{T'}\left(\int_{\R^{n}}V(|\f+w(t)|^{2})dx \right)\frac{\v(t+h)-\v(t)}{h} dt \right\} \\
&=\lim_{h' \rightarrow 0}\left\{ \int_{\e}^{T'}\left(\int_{\R^{n}}\frac{V(|\f+w(t+h')|^{2})-V(|\f+w(t)|^{2})}{h'} dx\right)\v(t) dt \right\} \\
&=\int_{I} 2\re \langle F(u(t)), \overline{\pa_{t}w(t)}\rangle_{A \times B} \v(t) dt.
\end{align*}
We need to justify the limiting procedure of the last line in the above. Since $(\pa /\pa \bar{z})(V(|z|^{2})) = \wt{F}(|z|)$ for any $z \in \C$, it follows that
\begin{align}
&\left|\int_{\R^{n}}\frac{V(|\f+w(t+h)|^{2})-V(|\f+w(t)|^{2})}{h}dx-2\re \langle \wt{F}(w(t)),\overline{\pa_{t}w(t)}\rangle_{A \times B} \right| \nonumber \\
&\leq \left|\int_{\R^{n}}2\re \left( \int_{0}^{1}\frac{\pa V}{\pa \bar{z}}(|\f+w(t)+\t(w(t+h)-w(t))|^{2})d\t \right. \right. \nonumber \\ 
&\qquad \quad \times \left. \frac{\overline{(w(t+h)-w(t))}}{h}\right) dx  \left. - 2\re\langle \wt{F}(w(t)),\overline{\pa_{t}w(t)}\rangle_{A \times B} \right| \nonumber \\
&\leq 2\left|\int_{\R^{n}}\left( \int_{0}^{1}\left(\frac{\pa V}{\pa \bar{z}}(|\f+w(t)+\t(w(t+h)-w(t))|^{2})-\wt{F}(w(t))\right) d\t \right. \right. \nonumber \\
&\qquad \quad \left. \left. \times \frac{\overline{(w(t+h)-w(t))}}{h}\right) dx\right| \nonumber \\
&\qquad +2\left|\int_{\R^{n}}\wt{F}(w(t))\frac{\overline{(w(t+h)-w(t))}}{h}dx- \langle \wt{F}(w(t)),\overline{\pa_{t}w(t)}\rangle_{A \times B}\right| \nonumber \\
&\leq 2\left|\int_{\R^{n}}\left( \int_{0}^{1}\left(\wt{F}( w(t)+\t(w(t+h)-w(t)))-\wt{F}(w(t))\right)d \t \right. \right. \nonumber \\
&\qquad \quad \left. \left. \times \frac{\overline{(w(t+h)-w(t))}}{h}\right) dx\right| \nonumber \\
&\qquad +2\left|\left\langle \wt{F}(w(t)),\frac{\overline{(w(t+h)-w(t))}}{h}\right\rangle_{H^{-1} \times H^{1}} - \langle \wt{F}(w(t)),\overline{\pa_{t}w(t)}\rangle_{A \times B} \right| \nonumber \\
&=: 2L_{1}+2L_{2}. \label{last:1}
\end{align}
\underline{The estimation of $L_{1}$.}\quad Choose the cutoff function $\c \in C_{0}^{\infty}(\R^{n})$ such that $0 \leq \c \leq 1$, $\c(\xi)=1$ for $|\xi|\leq 1$ and $\c(\xi)=0$ for $|\xi|\geq 2$. Using $\c(D_{x})$, we decompose $L_{1}$ as follows:
\begin{align*}
L_{1}&\leq \left|\int_{\R^{n}}\left( \int_{0}^{1}\c(D_{x})\left\{\wt{F}( w(t)+\t(w(t+h)-w(t)))-\wt{F}(w(t))\right\} d\t \right. \right. \nonumber \\
&\qquad \quad \left. \left. \times \frac{\overline{(w(t+h)-w(t))}}{h}\right) dx\right| \\
&\quad +\left|\int_{\R^{n}}\left( \int_{0}^{1}(1-\c(D_{x}))\left\{\wt{F}( w(t)+\t(w(t+h)-w(t)))-\wt{F}(w(t))\right\} d\t \right. \right. \nonumber \\
&\qquad \quad \left. \left. \times \frac{\overline{(w(t+h)-w(t))}}{h}\right) dx\right| \\
&=:K_{1}+K_{2}. 
\end{align*}
From now on, $L^{p}_{[\e, T']}X$ denotes the Banach space $L^{p}([\e,T'], X)$ for $p \in [1, \infty]$ and a Banach space $X$. \\
\underline{The estimation of $K_{1}$.}\quad  By Lemma \ref{lem03}, we get
\begin{align}
&\|F(w( \cdot )+\t(w(\cdot +h)-w(\cdot)))-F(w(\cdot))\|_{L^{\infty}_{[\e, T']}L^{2}+L^{\infty}_{[\e, T']}L^{q'}} \nonumber \\
 &\leq \|F_{1}(w( \cdot )+\t(w(\cdot +h)-w(\cdot)))-F_{1}(w(\cdot))\|_{L^{\infty}_{[\e, T']}L^{2}} \nonumber \\
  &\quad +\|F_{2}(w( \cdot )+\t(w(\cdot +h)-w(\cdot)))-F_{2}(w(\cdot))\|_{L^{\infty}_{[\e, T']}L^{q'}} \nonumber \\
 &\leq C\|w(\cdot +h)-w(\cdot)\|_{L^{\infty}_{[\e, T']}L^{2}}+C\|w(\cdot +h)-w(\cdot)\|_{L^{\infty}_{[\e, T']}H^{1}} \nonumber \\
  &\quad \times \{ (\|w(\cdot +h)\|_{L^{\infty}_{[\e, T']}H^{1}}+\|w(\cdot)\|_{L^{\infty}_{[\e, T']}H^{1}}) \nonumber \\
  &\quad \quad +(\|w(\cdot +h)\|_{L^{\infty}_{[\e, T']}H^{1}}+\|w(\cdot)\|_{L^{\infty}_{[\e, T']}H^{1}})^{\max(1,2\a_{1}-2)} \} \nonumber \\
 &\leq C\|w(\cdot +h)-w(\cdot)\|_{L^{\infty}_{[\e, T']}H^{1}}+C\|w(\cdot +h)-w(\cdot)\|_{L^{\infty}_{[\e, T']}H^{1}} \nonumber \\
  &\quad \times (\|w\|_{L^{\infty}_{T}H^{1}}+\|w\|_{L^{\infty}_{T}H^{1}}^{\max(1,2\a_{1}-2)}) \nonumber \\
 &\leq C\|w(\cdot +h)-w(\cdot)\|_{L^{\infty}_{[\e, T']}H^{1}}(1+\|w\|_{L^{\infty}_{T}H^{1}}+\|w\|_{L^{\infty}_{T}H^{1}}^{\max(1,2\a_{1}-2)}) \nonumber \\
 &\leq C\|w(\cdot +h)-w(\cdot)\|_{L^{\infty}_{[\e, T']}H^{1}}, \label{Furi1}
\end{align}
where $C$ depends on the norm $\|w\|_{X_{T}}$ of the space $X_{T}$. \\
$X_{[\e, T']}$ denotes $L^{\infty}_{[\e, T']}H^{1} \cap L^{p}_{[\e, T']}W^{1,q}$. Using the estimate similar to (\ref{lem05}) and (\ref{Furi1}), we obtain
\begin{align*}
 &\int_{\e}^{T'} K_{1} dt \\
 &\leq \int_{\e}^{T'} \left( \int_{0}^{1}\left\|\c(D_{x})\left\{\wt{F}( w(t)+\t(w(t+h)-w(t)))-\wt{F}(w(t))\right\}\right\|_{H^{1}} d\t \right. \\
 &\qquad \left. \times \left\|\frac{\overline{w(t+h)-w(t)}}{h}\right\|_{H^{-1}}\right) dt \\
 &\leq C\int_{0}^{1} \left( \|\wt{F}_{1}( w( \cdot )+\t(w(\cdot +h)-w(\cdot)))-\wt{F}_{1}(w(\cdot))\|_{L^{\infty}_{[\e, T']}L^{2}} \right. \\
 &\qquad \qquad \qquad \left. +\|\wt{F}_{2}( w( \cdot )+\t(w(\cdot +h)-w(\cdot)))-\wt{F}_{2}(w(\cdot))\|_{L^{\infty}_{[\e, T']}L^{q'}} \right) d \t \\
 &\qquad \times \left\|\frac{w(\cdot+h)-w(\cdot)}{h}\right\|_{L^{\infty}_{[\e, T']}H^{-1}} \\
 &\leq C\int_{0}^{1}\|w(\cdot+h)-w(\cdot)\|_{X_{[\e, T']}} d \t \left\|\frac{w(\cdot+h)-w(\cdot)}{h}\right\|_{L^{\infty}_{[\e, T']}H^{-1}} \\
 &\leq C\|w(\cdot+h)-w(\cdot)\|_{X_{[\e, T']}} \left\|\frac{w(\cdot+h)-w(\cdot)}{h}\right\|_{L^{\infty}_{[\e, T']}H^{-1}}.
\end{align*}
By lemma \ref{lem04}, we have
\begin{align}
 &\|\n F(w(\cdot)+\t(w(\cdot+h)-w(\cdot)))-\n F(w(\cdot))\|_{L^{\infty}_{[\e, T']}L^{2}+L^{p'}_{[\e, T']}L^{q'}} \nonumber \\
 &\leq \|\wt{G}_{1}(w(\cdot)+\t(w(\cdot+h)-w(\cdot)))-\wt{G}_{1}(w(\cdot))\|_{L^{\infty}_{[\e, T']}L^{2}} \nonumber \\
 &\quad +\|\wt{G}_{2}(w(\cdot)+\t(w(\cdot+h)-w(\cdot)))-\wt{G}_{2}(w(\cdot))\|_{L^{p'}_{[\e, T']}L^{q'}} \nonumber \\
 & \leq C\|w(\cdot+h)-w(\cdot)\|_{L^{\infty}_{[\e, T']}H^{1}} \nonumber \\
 &\quad +C(1+\|w(\cdot+h)\|_{L^{\infty}_{[\e, T']}H^{1}}+\|w(\cdot)\|_{L^{\infty}_{[\e, T']}H^{1}})^{\max(1,2\a_{1}-2)} \nonumber \\ 
 &\quad \quad \times \|w(\cdot+h)-w(\cdot)\|_{L^{\infty}_{[\e, T']}H^{1}} \nonumber \\
 &\quad +C\|w(\cdot+h)-w(\cdot)\|_{X_{[\e, T']}}(\|w(\cdot+h)\|_{X_{[\e, T']}}^{\max(1,2\a_{1}-2)}+\|w(\cdot)\|_{X_{[\e, T']}}^{\max(1,2\a_{1}-2)}) \nonumber \\
 &\quad +C\|w(\cdot+h)-w(\cdot)\|_{X_{[\e, T']}}(1+\|w(\cdot+h)\|_{L^{\infty}_{[\e, T']}H^{1}}+\|w(\cdot)\|_{L^{\infty}_{[\e, T']}H^{1}}) \nonumber \\
 &\quad \quad \times (\|w(\cdot+h)\|_{X_{[\e, T']}}^{\max(0,2\a_{1}-3)}+\|w(\cdot)\|_{X_{[\e, T']}}^{\max(0,2\a_{1}-3)}) \nonumber \\
 &\leq C\|w(\cdot+h)-w(\cdot)\|_{X_{[\e, T']}} \label{Furi2},
\end{align}
where $C$ depends on $\|w\|_{X_{T}}$. Using the estimate similar to (\ref{lem06}) and (\ref{Furi2}), we have
\begin{align*}
 &K_{2} \leq \left\|\int_{0}^{1}(1-\c(D_{x}))\left\{\wt{F}( w(t)+\t(w(t+h)-w(t)))-\wt{F}(w(t))\right\} d\t \right\|_{H^{1}+W^{1,q'}} \\
 &\quad \quad \times \left\|\frac{\overline{(w(t+h)-w(t))}}{h}\right\|_{H^{-1} \cap W^{-1,q}} \\
 &\leq \int_{0}^{1} \left\|(1-\c(D_{x}))\left\{\wt{F}( w(t)+\t(w(t+h)-w(t)))-\wt{F}(w(t))\right\} \right\|_{H^{1}+W^{1,q'}} d \t \\
 &\quad \quad \times \left\|\frac{\overline{(w(t+h)-w(t))}}{h}\right\|_{H^{-1} \cap W^{-1,q}} \\
 &\leq C\int_{0}^{1}\left( \|\wt{G}_{1}( w(t)+\t(w(t+h)-w(t)))-\n \wt{G}_{1}(w(t))\|_{L^{2}} \right. \\
 &\qquad \qquad \qquad \left. +\|\wt{G}_{2}( w(t)+\t(w(t+h)-w(t)))-\n \wt{G}_{2}(w(t))\|_{L^{q'}} \right) d \t \\
 &\quad \quad \times \left\|\frac{w(t+h)-w(t)}{h}\right\|_{H^{-1} \cap W^{-1,q}}. 
\end{align*}
Hence, we get
\begin{align*}
&\int_{\e}^{T'}K_{2} dt \\
 &\leq C \left\| \int_{0}^{1} \left( \|\wt{G}_{1}( w(\cdot)+\t(w(\cdot+h)-w(\cdot)))-\wt{G}_{1}(w(\cdot))\|_{L^{2}} \right. \right. \\
 &\qquad \qquad \qquad \left. \left. +\|\wt{G}_{2}( w(\cdot)+\t(w(\cdot+h)-w(\cdot)))-\wt{G}_{2}(w(\cdot))\|_{L^{q'}} d\t \right) \right\|_{L^{p'}_{[\e, T']}} \\
 &\qquad \qquad \times \left\|\frac{w(\cdot+h)-w(\cdot)}{h}\right\|_{L^{p}_{[\e, T']}(H^{-1} \cap W^{-1,q})} \\
 &\leq C \int_{0}^{1} \left( \|\wt{G}_{1}( w(\cdot)+\t(w(\cdot+h)-w(\cdot)))-\wt{G}_{1}(w(\cdot))\|_{L^{\infty}_{[\e, T']}L^{2}} \right. \\
 &\qquad \qquad \qquad \left. +\|\wt{G}_{2}( w(\cdot)+\t(w(\cdot+h)-w(\cdot)))-\wt{G}_{2}(w(\cdot))\|_{L^{p'}_{[\e, T']}L^{q'}} \right)  d \t \\
 &\qquad \qquad \times \left\|\frac{w(\cdot+h)-w(\cdot)}{h}\right\|_{L^{\infty}_{[\e, T']}H^{-1} \cap L^{p}_{[\e, T']}W^{-1,q}} \\
 &\leq C \int_{0}^{1}  \|w(\cdot+h)-w(\cdot)\|_{X_{[\e, T']}} d \t \left\|\frac{w(\cdot+h)-w(\cdot)}{h}\right\|_{L^{\infty}_{[\e, T']}H^{-1} \cap L^{p}_{[\e, T']}W^{-1,q}} \\
 &\leq C\|w(\cdot+h)-w(\cdot)\|_{X_{[\e, T']}} \left\|\frac{w(\cdot+h)-w(\cdot)}{h}\right\|_{L^{\infty}_{[\e, T']}H^{-1} \cap L^{p}_{[\e, T']}W^{-1,q}}.  
\end{align*}
Thus, we obtain
\begin{align}
\int_{\e}^{T'}L_{1}dt &= \int_{\e}^{T'} K_{1}dt+\int_{\e}^{T'} K_{2}dt \nonumber \\
&\leq C\|w(\cdot+h)-w(\cdot)\|_{X_{[\e, T']}} \nonumber \\
&\quad \times \left( \left\|\frac{w(\cdot+h)-w(\cdot)}{h}\right\|_{L^{\infty}_{[\e, T']}H^{-1}}+\left\|\frac{w(\cdot+h)-w(\cdot)}{h}\right\|_{L^{p}_{[\e, T']}W^{-1,q}} \right). \label{last:2}
\end{align}
\underline{The estimation of $L_{2}$.}\quad It follows from Lemma \ref{lem01}, Lemma \ref{lem03} and Lemma \ref{lemf05} that for almost all $t \in [\e, T']$, 
\begin{align*}
L_{2} &\leq \left|\left\langle \c(D_{x})\wt{F}(w(t)),\frac{\overline{w(t+h)-w(t)}}{h}-\overline{\pa_{t}w(t)}\right\rangle_{H^{1} \times H^{-1}} \right| \\
&\quad +\left|\left\langle (1-\c(D_{x}))\wt{F}(w(t)), \frac{\overline{w(t+h)-w(t)}}{h}-\overline{\pa_{t}w(t)}\right\rangle_{\begin{subarray}{l} (H^{1}+W^{1,q'}) \\ \times (H^{-1} \cap W^{-1,q}) \end{subarray}} \right| \\
&\leq \|\c(D_{x})\wt{F}(w(t))\|_{H^{1}}\left\|\frac{w(t+h)-w(t)}{h}-\pa_{t}w(t)\right\|_{H^{-1}} \\
&\quad +\|(1-\c(D_{x}))\wt{F}(w(t))\|_{H^{1}+W^{q'}}\left\|\frac{(w(t+h)-w(t))}{h}-\pa_{t}w(t)\right\|_{H^{-1} \cap W^{-1,q}} \\
&\leq C\left( \|\wt{F}_{1}(w(t))\|_{L^{2}}+ \|\wt{F}_{2}(w(t))\|_{L^{q'}} \right) \left\|\frac{w(t+h)-w(t)}{h}-\pa_{t}w(t)\right\|_{H^{-1}} \\
&\quad +C\left( \|\wt{G}_{1}(w(t))\|_{L^{2}}+ \|\wt{G}_{2}(w(t))\|_{L^{q'}} \right) \\
&\quad \quad \times \left( \left\|\frac{w(t+h)-w(t)}{h}-\pa_{t}w(t)\right\|_{H^{-1}} \right. \\
& \qquad \quad \quad \left. +\left\|\frac{w(t+h)-w(t)}{h}-\pa_{t}w(t)\right\|_{W^{-1,q}} \right).
\end{align*}
Hence, we deduce that
\begin{align}
&\int_{\e}^{T'}L_{2}dt \nonumber \\
&\leq C\left( \|\wt{F}_{1}(w)\|_{L^{1}_{T}L^{2}}+ \|\wt{F}_{2}(w)\|_{L^{1}_{T}L^{q'}} \right) \left\|\frac{w(\cdot+h)-w(\cdot)}{h}-\pa_{t}w(\cdot)\right\|_{L^{\infty}_{[\e, T']}H^{-1}} \nonumber \\
 &\quad +C\left( \|\wt{G}_{1}(w)\|_{L^{p'}_{T}L^{2}}+ \|\wt{G}_{2}(w)\|_{L^{p'}_{T}L^{q'}} \right) \nonumber \\
  &\quad \quad \times \left( \left\|\frac{w(\cdot+h)-w(\cdot)}{h}-\pa_{t}w(\cdot)\right\|_{L^{\infty}_{[\e, T']}H^{-1}} \right. \nonumber \\ 
& \qquad \quad \quad \left. +\left\|\frac{w(\cdot+h)-w(\cdot)}{h}-\pa_{t}w(\cdot)\right\|_{L^{p}_{[\e, T']}W^{-1,q}} \right). \label{last:3}
\end{align}
In conclusion, concatenating (\ref{last:1}), (\ref{last:2}) and (\ref{last:3}),
\begin{align*}
&\left| \int_{\e}^{T'}\int_{\R^{n}}\left(\frac{V(|\f+w(t+h')|^{2})-V(|\f+w(t)|^{2})}{h'} \right)\v(t) dtdx \right. \\
&\qquad \left. -\int_{\e}^{T'}(\re \langle \wt{F}(w(t)), \overline{\pa_{t}w(t)}\rangle)\v(t) dt \right| \\
&\leq 2\int_{\e}^{T'}L_{1}|\v(t)|dt + 2\int_{\e}^{T'}L_{2}|\v(t)|dt \\
&\leq C\|w(\cdot+h)-w(\cdot)\|_{X_{[\e, T']}} \\
&\qquad \times \left( \left\|\frac{w(\cdot+h)-w(\cdot)}{h}\right\|_{L^{\infty}_{[\e, T']}H^{-1}}+\left\|\frac{w(\cdot+h)-w(\cdot)}{h}\right\|_{L^{p}_{[\e, T']}W^{-1,q}} \right) \\
&\quad +C\left( \|\wt{F}_{1}(w)\|_{L^{1}_{T}L^{2}}+ \|\wt{F}_{2}(w)\|_{L^{1}_{T}L^{q'}} \right) \left\|\frac{w(\cdot+h)-w(\cdot)}{h}-\pa_{t}w(\cdot)\right\|_{L^{\infty}_{[\e, T']}H^{-1}} \\
 &\quad +C\left( \|\wt{G}_{1}(w)\|_{L^{p'}_{T}L^{2}}+ \|\wt{G}_{2}(w)\|_{L^{p'}_{T}L^{q'}} \right) \\
 &\qquad \times \left( \left\|\frac{w(\cdot+h)-w(\cdot)}{h}-\pa_{t}w(\cdot)\right\|_{L^{\infty}_{[\e, T']}H^{-1}} \right. \\
 &\qquad \quad \quad \left. +\left\|\frac{w(\cdot+h)-w(\cdot)}{h}-\pa_{t}w(\cdot)\right\|_{L^{p}_{[\e, T']}W^{-1,q}} \right). 
\end{align*}
Noting Lemma \ref{delt:1} and the fact that a local Lipschitz continuity (\ref{thm2f}) in Theorem \ref{thm:1} yields 
\begin{align*}
\|w(\cdot+h)-w(\cdot)\|_{X_{[\e, T']}} &\leq C\|w(|h|)-w_{0}\|_{H^{1}} \\
 &\rightarrow 0 \quad as\quad h \rightarrow 0,
\end{align*}
we obtain 
\[
\left\langle \frac{\pa}{\pa t}\int_{\R^{n}}V(u)dx, \v \right\rangle_{{\mathcal D}' \times {\mathcal D}(I)}=
2\int_{I} \re \langle \wt{F}(w(t)), \overline{\pa_{t}w(t)}\rangle \v(t) dt. 
\]
Since the estimation (\ref{ti:1}) means $\re \langle \wt{F}(w(t)), \overline{\pa_{t}w(t)}\rangle \in L^{1}(I)$, we complete the proof of Lemma \ref{llem:1}.
\end{proof}
In conclusion, by Lemma \ref{llem:1}, we complete the proof of the main result. $\square$
% \end{proof}

\section*{Acknowledgment}
The author would like to express deep gratitude to Professor Mishio Kawashita for helpful comments
and warm encouragements.

\section{Appendix}
\begin{proof}[Proof of Lemma \ref{lem01}]
We decompose $-f(|\f+w|^{2})(\f+w)$ as
\begin{align}
-f(|\f+w|^{2})(\f+w) &=\wt{F}_{1}(w)+\wt{F}_{2}(w), \label{ap:1} 
\end{align}
where
\begin{align*}
\wt{F}_{1}(w) &:= -f(|\f|^{2})(\f+w)-2\re[\bar{\f}w] f'(|\f|^{2})\f, \\
\wt{F}_{2}(w) &:= -\{f(|\f+w|^{2})(\f+w)-f(|\f|^{2})(\f+w)\}+2\re[\bar{\f}w] f'(|\f|^{2})\f.
\end{align*}
According to Lemma 4.1 in Gallo \cite{bib007}, by the assumption $(\textbf{H}'_{\f})$ and $f(|\f|^{2}) \in L^{2}$, we deduce that 
\begin{align*}
 \|\wt{F}_{1}(w)\|_{L^{\infty}_{T}L^{2}} \leq C(1+\|w\|_{L^{\infty}_{T}L^{2}}),\quad |\wt{F}_{2}(w)| \leq C|w|^{2}(1+|w|)^{\max(0, 2\a_{1}-3)},
\end{align*}
Therefore for all $t \in [0,T]$, we estimate that
\begin{align*}
 \|\wt{F}_{2}(w(t))\|_{L^{q'}} &\leq C\||w(t)|^{2}(1+|w(t)|)^{\max( 0, 2\a_{1}-3 )} \|_{L^{q'}} \\
  &\leq C\|w(t)\|_{L^{2q'}}^{2}+C\|w(t)\|^{\max(2, 2\a_{1}-1 )}_{L^{q'\max(2, 2\a_{1}-1 )}} \\
  &\leq C\|w(t)\|_{H^{1}}^{2}+C\|w(t)\|^{\max(2, 2\a_{1}-1 )}_{H^{1}}. \\
\end{align*}
Hence, we deduce that
\begin{align*}
\|\wt{F}_{2}(w)\|_{L^{\infty}_{T}L^{q'}} \leq C\|w\|_{L^{\infty}_{T}H^{1}}^{2}+C\|w\|^{\max(2, 2\a_{1}-1 )}_{L^{\infty}_{T}H^{1}}.
\end{align*}
In conclusion, we get
\begin{align*}
&\|\wt{F}_{1}(w)\|_{L^{\infty}_{T}L^{2}}+\|\wt{F}_{2}(w)\|_{L^{\infty}_{T}L^{q'}}  \nonumber \\
&\quad  \leq C(1+\|w\|_{L^{\infty}_{T}L^{2}})+C(\|w\|_{L^{\infty}_{T}H^{1}}^2+\|w\|_{L^{\infty}_{T}H^{1}}^{\max(2, 2\a_{1}-1)}).
\end{align*}

Next, we show $\wt{F}(w) \in L^{p}_{T}L^{2}$. We apply an interpolation method (see Lemma 4.2 in Gallo \cite{bib007}). Thanks to the H{\"o}lder inequality and Gagliardo-Nirenberg's inequality, we estimate 
\begin{align}
\|\wt{F}_{2}(w)\|_{L^{p}_{T}L^{2}} &\leq C\|w\|^{2}_{L^{2p}_{T}L^{4}}+\|w\|^{\max(2, 2\a_{1}-1)}_{L_{T}^{p\max(2, 2\a_{1}-1)}L^{2\max(2, 2\a_{1}-1)}} \nonumber \\
 &\leq C\|w\|^{2}_{L^{\infty}_{T}H^{1}}+\|w\|^{\max(2, 2\a_{1}-1)}_{L_{T}^{s}W^{1,r}}, \label{td24} 
\end{align}
where we choose the pair $(s,r)$ such that
\begin{itemize}
 \item If $\frac{1}{2}-\frac{1}{n} \leq \frac{1}{p\max(2, 2\a_{1}-1)}$ (which means that $H^{1} \hookrightarrow L^{p\max(2, 2\a_{1}-1)}$),\\
         \quad then $(s,r)=(\infty, 2)$.
 \item If $\frac{1}{2}-\frac{1}{n} > \frac{1}{p\max(2, 2\a_{1}-1)}$, \\
         \quad then $r>2$ and
         \begin{enumerate}
           \item $\frac{2}{s}+\frac{n}{r}=\frac{n}{2}$ \quad (which means that $(s,r)$ is an admissible pair), \\
           \item $0 \leq \frac{1}{r}-\frac{1}{n} \leq \frac{1}{p\max(2, 2\a_{1}-1)}$ (which gives the Sobolev embedding $W^{1,r} \hookrightarrow L^{p\max(2, 2\a_{1}-1)}$), \\
           \item $\frac{1}{p\max(2, 2\a_{1}-1)} \geq \frac{1}{s}$ (which gives $L^{s}_{T} \hookrightarrow L^{p\max(2, 2\a_{1}-1)}_{T}$).
         \end{enumerate}
\end{itemize}
Such the choice of $s$ and $r$ is possible if and only if $s$ and $r$ satisfy the following inequality:
\begin{align}
 \frac{n}{2}-1 \leq  \frac{2+n}{p\max(2, 2\a_{1}-1)}. \label{ic3}
\end{align}
Indeed, if (\ref{ic3}) is true, then it is sufficient to choose
\begin{align*}
\frac{n}{r} \in \left[ \frac{n}{2}-\frac{2}{p\max(2, 2\a_{1}-1)},\; 1+\frac{n}{p\max(2, 2\a_{1}-1)} \right].
\end{align*}
Moreover, since $H^{1} \hookrightarrow L^{p\max(2, 2\a_{1}-1)}$ if $n=2$ or if $n=3$ and $1\leq\a_{1}\leq 2$ or if $n=4$ and $1\leq\a_{1}\leq 3/2$, we consider that $n=3$ and $2<\a_{1}<3$ or $n=4$ and $3/2<\a_{1}<2$. Since $2<r<3$ and $(s,r)$ is an admissible pair, we can choose $\wt{\t} \in (0, 1)$ satisfying  
\[
\frac{1-\wt{\t}}{2}+\frac{\wt{\t}}{q}=\frac{1}{r}, \quad \frac{1-\wt{\t}}{\infty}+\frac{\wt{\t}}{p}=\frac{1}{s}.
\]
Thus, using interpolation method, 
\begin{align}
\|w\|_{L^{s}_{T}W^{1,r}} &\leq C\|w\|_{L^{\infty}_{T}H^{1}}^{1-\wt{\t}}\|w\|_{L^{p}_{T}W^{1,q}}^{\wt{\t}} \nonumber \\
 &\leq C(\|w\|_{L^{\infty}_{T}H^{1}}+\|w\|_{L^{p}_{T}W^{1,q}}) \nonumber \\
 &= C\|w\|_{X_{T}}. \label{td25}
\end{align}
From (\ref{td24}) and (\ref{td25}), we deduce that 
\begin{align*}
\|F_{2}(w)\|_{L^{p}_{T}L^{2}} &\leq C(\|w\|^{2}_{L^{\infty}_{T}H^{1}}+\|w\|^{\max(2, 2\a_{1}-1)}_{X_{T}}).  
\end{align*}
Thus, we get $F(w) \in L^{p}_{T}L^{2}$.
\end{proof}

\begin{proof}[Proof of Lemma \ref{lem03}]
we use the decomposition (\ref{ap:1}) again. As is in Gallo \cite{bib007}, we also have
\begin{align*}
|\wt{F}_{1}(w_{1})-\wt{F}_{1}(w_{2})| &\leq C|w_{1}-w_{2}|, \\
|\wt{F}_{2}(w_{2})-\wt{F}_{2}(w_{2})| &\leq C|w_{1}-w_{2}|(|w_{1}|+|w_{2}|)(1+|w_{1}|+|w_{2}|)^{\max(0,2\a_{1}-3)}.
\end{align*}
Therefore we deduce that
\begin{align*}
&\|\wt{F}_{1}(w_{1})-\wt{F}_{1}(w_{2})\|_{L^{\infty}_{T}L^{2}} \leq C\|w_{1}-w_{2}\|_{L^{\infty}_{T}L^{2}}.
\end{align*}
Moreover let 
\[
(q_{1},q_{2}):=
\begin{cases}
 \ds (2, 3)\quad \text{if $n=2$, or $n=3$ and $\a_{1} \leq 2$}, \\
 \ds \left( \frac{q}{q-1-\max(1, 2\a_{1}-2)}, \frac{q}{\max(1, 2\a_{1}-2)} \right) \\
\hspace{4cm} \text{if $n=3$ and $2< \a_{1} < 3$ or $n=4$},
\end{cases}
\]
with $\frac{1}{q'}=\frac{1}{q_{1}}+\frac{1}{q_{2}}$. Since if $n=3$ and $2< \a_{1} < 3$ or $n=4$, then $H^{1} \hookrightarrow L^{q_{1}}$,
for all $t \in [0,T]$, we estimate
\begin{align*}
&\|\wt{F}_{2}(w_{2}(t))-\wt{F}_{2}(w_{2}(t))\|_{L^{q'}} \\
 &\leq C\|w_{1}(t)-w_{2}(t)\|_{L^{2q'}}(\|w_{1}(t)\|_{L^{2q'}}+\|w_{2}(t)\|_{L^{2q'}}) \\
 &\quad +C\|w_{1}(t)-w_{2}(t)\|_{L^{q_{1}}}\||w_{1}(t)|+|w_{2}(t)|\|^{\max(1, 2\a_{1}-2 )}_{L^{q_{2}\max(1, 2\a_{1}-2 )}} \\
 &\leq C\|w_{1}(t)-w_{2}(t)\|_{L^{2q'}}(\|w_{1}(t)\|_{L^{2q'}}+\|w_{2}(t)\|_{L^{2q'}}) \\
 &\quad +C\|w_{1}(t)-w_{2}(t)\|_{L^{q_{1}}}\||w_{1}(t)|+|w_{2}(t)|\|^{\max(1, 2\a_{1}-2 )}_{L^{q}} \\
 &\leq C\|w_{1}(t)-w_{2}(t)\|_{H^{1}}(\|w_{1}(t)\|_{H^{1}}+\|w_{2}(t)\|_{H^{1}}) \\
 &\quad +C\|w_{1}(t)-w_{2}(t)\|_{H^{1}}(\|w_{1}(t)\|_{H^{1}}+\|w_{2}(t)\|_{H^{1}})^{\max(1, 2\a_{1}-2 )}. 
\end{align*}
In conclusion, we get
\begin{align*}
 &\|\wt{F}(w_{1})-\wt{F}(w_{2})\|_{L^{\infty}_{T}L^{2}+L^{\infty}_{T}L^{q'}} \\
  & \leq CT\|w_{1}-w_{2}\|_{L^{\infty}_{T}L^{2}}+C\|w_{1}-w_{2}\|_{L^{\infty}_{T}H^{1}} \\
  &\quad \times ((\|w_{1}\|_{L^{\infty}_{T}H^{1}}+\|w_{2}\|_{L^{\infty}_{T}H^{1}})+(\|w_{1}\|_{L^{\infty}_{T}H^{1}}+\|w_{2}\|_{L^{\infty}_{T}H^{1}})^{\max(1,2\a_{1}-2)}). 
 \end{align*}
\end{proof}

\end{document}